\documentclass[11pt]{article}

\usepackage[a4paper,margin=1in]{geometry}
\usepackage[T1]{fontenc}
\usepackage[utf8]{inputenc}
\usepackage{lmodern}
\usepackage{microtype}
\usepackage{amsmath,amssymb,amsthm,mathtools}
\usepackage{comment}
\usepackage{mathrsfs}
\usepackage{enumitem}
\usepackage{xcolor}
\usepackage{hyperref}
\usepackage[nameinlink,capitalise]{cleveref}
\hypersetup{colorlinks=true,linkcolor=blue,citecolor=blue,urlcolor=blue}

\newtheorem{theorem}{Theorem}[section]
\newtheorem{proposition}[theorem]{Proposition}
\newtheorem{lemma}[theorem]{Lemma}
\newtheorem{corollary}[theorem]{Corollary}
\theoremstyle{definition}
\newtheorem{definition}[theorem]{Definition}
\newtheorem{remark}[theorem]{Remark}
\newtheorem{conjecture}[theorem]{Conjecture}

\newcommand{\F}{\mathbb F}
\newcommand{\Fp}{\mathbb F_p}
\newcommand{\Fpp}{\mathbb F_{p^2}}

\newcommand{\PP}{\mathbb P}

\newcommand{\rdeg}{\operatorname{rdeg}}
\newcommand{\Res}{\operatorname{Res}}
\newcommand{\squarep}{\square_p}

\title{A complete characterization of a family of permutation trinomials over $\mathbb F_{p^2}$}
\author{Marco Timpanella}
\date{}

\begin{document}
\maketitle

\begin{abstract}
Let $p>3$ be a prime and let
\[
 f_{\lambda_1,\lambda_2}(x)
 =x^{p^2-p+1}+\lambda_1x^{p^2}+\lambda_2x^{2p-1}
 \in \Fpp[x].
\]
We determine all pairs $(\lambda_1,\lambda_2)\in(\Fpp)^2$ for which
$f_{\lambda_1,\lambda_2}$ is a permutation polynomial of $\Fpp$.
The final classification consists of three explicit families. The first one is the
binomial case $\lambda_1=0$. The other two are obtained from the condition
$\lambda_2=c\lambda_1^3$, with $c\in\Fp^{*}$, and are defined by two
simple equations involving the norm $\lambda_1^{p+1}$. The proof is based on the
AGW criterion and on the study of a quartic curve naturally associated with the
rational function induced on the unit circle $\mu_{p+1}$.
\end{abstract}

\section{Introduction}

Permutation polynomials over finite fields are a classical topic and continue to play an
important role because of their applications to finite geometry, coding theory, and cryptography. We refer to \cite{HouSurvey} for a survey on permutation polynomials, and to \cite{BartoliSurveys} for a survey on the use of algebraic varieties in the study of
relevant functions over finite fields.

One of the most studied problems is the classification of sparse permutation polynomials, especially
trinomials with a specific shape. In particular, permutation trinomials with Niho-type exponents over $\F_{q^2}$, that is polynomials of the form
\[
 F(x)=x+\alpha x^{s_1(q-1)+1}+\beta x^{s_2(q-1)+1},
\]
where $s_1,s_2\in\mathbb Z$ and $\alpha,\beta\in\F_q$, have been widely investigated;
see for instance
\cite{BartoliTimpanella2021,BartoliConj,BartoliTimpanella,BaiXia,RaiGupta,
BartoliPalStanica,GrasslOzbudakOzkayaTemur,LiHellesethNew}.

In this paper we study the family
\begin{equation}\label{eq:fdef}
 f_{\lambda_1,\lambda_2}(x)
 =x^{p^2-p+1}+\lambda_1x^{p^2}+\lambda_2x^{2p-1},
 \qquad \lambda_1,\lambda_2\in\Fpp,
\end{equation}
viewed as polynomials over $\Fpp$. This is the case $q=p$ of the more general Niho-type family
\[
 x^{(p-1)q+1}+\lambda_1x^{pq}+\lambda_2x^{q+p-1},
\]
which has been studied in several recent papers under different assumptions. In
\cite{Hou}, Hou completely determined the permutation properties of this polynomial over
finite fields of characteristic $3$. Later, Bai and Xia~\cite{BaiXia} studied the case
$\lambda_1=1$, $\lambda_2=-1$ over $\F_{q^2}$, with $q=p^k$ and
$p\in\{3,5\}$, proving that it is a permutation polynomial if and only if $k$ is even.
In characteristic $5$, Gupta and Rai~\cite{RaiGupta2} considered the family with the
assumption $\lambda_2=1$ and showed that, for $k>1$, it permutes $\F_{q^2}$ precisely
when $\lambda_1=-1$ and $k$ is even. In a subsequent paper~\cite{RaiGupta}, the same
authors treated the case of characteristic $7$, proving that if $\lambda_2=1$ then the
polynomial is a permutation of $\F_{q^2}$ if and only if either
$\lambda_1=-3$ and $k=1$, or $\lambda_1=-1$ and $k=2$. They also proved that, for
$p>3$ and $k=1$, the polynomial with $\lambda_2=1$ is a permutation polynomial if and
only if $\lambda_1=-3$.

For primes $p>7$ and integers $k>1$, the authors of~\cite{RaiGupta} conjectured that if
$q=p^k$, $\lambda_2=1$, and $\lambda_1\in\F_q^*$, then the corresponding trinomial
permutes $\F_{q^2}$ if and only if $\lambda_1=-1$ and $k=2$. This conjecture was later
settled by Bartoli, Pal, and St\u{a}nic\u{a}~\cite{BartoliPalStanica}.

Our goal is to investigate the case $q=p$ in any characteristic greater than $3$, and allowing the coefficients to vary in
\(
 \lambda_1,\lambda_2\in\Fpp
\)
and obtain a complete classification. This gives, as a special case, the classification when
$\lambda_1,\lambda_2\in\Fp$, which is discussed at the end of the paper.

We now state the main result. We denote by $\Fp^*$ the nonzero elements of $\Fp$ and by
$\squarep$ the set of nonzero squares of $\Fp$.

\begin{theorem}[Main Theorem]\label{thm:main}
Let $p>3$ be a prime, and let $\lambda_1,\lambda_2\in\Fpp$, and $N=\lambda_1^{p+1}$. Define
$f_{\lambda_1,\lambda_2}$ by \eqref{eq:fdef}. Then $f_{\lambda_1,\lambda_2}$ is a
permutation polynomial of $\Fpp$ if and only if one of the following cases holds.
\begin{enumerate}[label=\rm(\roman*),leftmargin=1.5em]
\item
\[
 \lambda_1=0,
 \qquad
 p\equiv1\pmod3,
 \qquad
 \lambda_2^{p+1}\neq1.
\]

\item
$\lambda_1\neq0$, and there exists $c\in\Fp^*$ such that
\[
 \lambda_2=c\lambda_1^3,\qquad  c^2N^3-cN^2-1=0
 \qquad\text{and}\qquad
 1-4c^2N^3\in\squarep.
\]

\item
$\lambda_1\neq0$, and there exists $c\in\Fp^*$ such that

\[
 \lambda_2=c\lambda_1^3,\qquad   3cN+1=0
 \qquad\text{and}\qquad
-3(1-4c^2N^3)\in\squarep.
\]
\end{enumerate}
\end{theorem}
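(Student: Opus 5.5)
We would follow the route announced in the abstract: reduce to a rational function on the unit circle $\mu_{p+1}$ via the AGW criterion, then decide when that function permutes $\mu_{p+1}$ by studying the associated curve. Since $p^2-p+1=1+p(p-1)$, $2p-1=1+2(p-1)$, and $x^{p^2}=x$ as functions on $\Fpp$, we get $f_{\lambda_1,\lambda_2}(x)=x\,h(x^{p-1})$ with $h(y)=y^p+\lambda_1+\lambda_2y^2$. By AGW, $f_{\lambda_1,\lambda_2}$ permutes $\Fpp$ if and only if two conditions hold: $h$ has no zero on $\mu_{p+1}$, and $g(y)=y\,h(y)^{p-1}$ permutes $\mu_{p+1}$. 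On $\mu_{p+1}$ we have $y^p=y^{-1}$ and $h(y)^p=y+\lambda_1^p+\lambda_2^py^{-2}$. Hence
\[
 g(y)=\frac{y^3+\lambda_1^py^2+\lambda_2^p}{\lambda_2y^3+\lambda_1y+1},
\]
a rational function of degree at most $3$. We then transport the problem to $\PP^1(\Fp)$ by a Möbius map $\mu_{p+1}\to\PP^1(\Fp)$. This yields a rational function $G$ over $\Fp$ of degree $\le 3$, and we must decide when $G$ permutes $\PP^1(\Fp)$.

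The first case is $\lambda_1=0$. Then $g(y)=\frac{y^3+\lambda_2^p}{\lambda_2y^3+1}$, which is a Möbius transformation composed with $y\mapsto y^3$. The Möbius part is nondegenerate exactly when $\lambda_2^{p+1}\ne1$; the same condition guarantees that $h$ has no zeros on $\mu_{p+1}$ (if $\lambda_2^{p+1}=1$, $g$ degenerates). The cube map permutes $\mu_{p+1}$ iff $3\nmid p+1$, i.e. $p\equiv1\pmod 3$, since $p>3$. This gives (i).

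For $\lambda_1\ne0$ we consider the curve $\mathcal C:\ (G(X)-G(Y))/(X-Y)=0$, which is the quartic mentioned in the abstract once both sides are homogenized. If $\mathcal C$ has an absolutely irreducible component defined over $\Fp$, then Hasse--Weil gives it roughly $p$ affine points off the diagonal, and hence collisions, for $p$ beyond an explicit bound. So in the permutation case $\mathcal C$ must split into components not defined over $\Fp$. For a degree-$3$ map this means that $G$ is exceptional, i.e. $\Fp$-conjugate (via Möbius maps over $\Fpp$ descending appropriately) to $x^3$ or to a Rédei-type cubic whose two totally ramified points are conjugate over $\Fpp$. We would then compute the ramification of $g$: the critical points are the roots of the Wronskian of numerator and denominator. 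Demanding exactly two total ramification points (equivalently, that the discriminant-type invariants vanish) should force $\lambda_2=c\lambda_1^3$ with $c\in\Fp^*$. It should also give one polynomial relation in $N=\lambda_1^{p+1}$ and $c$. We expect two solution branches: the relation $c^2N^3-cN^2-1=0$ and the relation $3cN+1=0$, corresponding to the two ways the symmetric structure can degenerate. The final conditions, that $1-4c^2N^3$ (respectively $-3(1-4c^2N^3)$) is a nonzero square, should come from requiring that the ramification points, or equivalently the fixed points of the conjugating involution, are placed correctly relative to $\mu_{p+1}$. Concretely, they lie either on $\mu_{p+1}$ or off it, which decides between the $x^3$ model and the Rédei model and whether $3\mid p\pm1$ matches. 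We would close this step by checking directly that $h$ has no roots on $\mu_{p+1}$ in cases (ii) and (iii), and by treating the few primes below the Hasse--Weil threshold by direct computation.

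The main obstacle is the factorization analysis of $\mathcal C$: showing that, unless $\lambda_2=c\lambda_1^3$ together with one of the two norm equations holds, $\mathcal C$ has an absolutely irreducible $\Fp$-component. We would attack it by intersecting $\mathcal C$ with the line at infinity and the diagonal-adjacent tangent cone. This constrains possible splittings to either two conics or a line and a cubic. We would then compare coefficients, using the $\mathbb F_{p^2}/\Fp$ Frobenius action to force conjugate components, which is where $c\in\Fp^*$ should emerge.
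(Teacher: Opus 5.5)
There is a genuine gap: you never split the problem by the reduced degree of $G=N(T)/D(T)$, and as a result you misidentify where family (ii) comes from. You plan to obtain both (ii) and (iii) from the exceptional-cubic dichotomy ($X^3$ versus R\'edei type), with the square conditions reflecting where the two totally ramified points sit relative to $\mu_{p+1}$ and whether this matches $p \bmod 3$. But family (ii) is not a cubic phenomenon. The conditions $\lambda_2=c\lambda_1^3$, $c\in\Fp^*$, $c^2N^3-cN^2-1=0$ are exactly \eqref{eq:E1}, \eqref{eq:E2}, under which $N(T)$ and $D(T)$ share a quadratic factor $Q(T)$. So $G$ collapses to a M\"obius map, and once AGW condition (a) holds it is automatically a bijection of $\mu_{p+1}$. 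The square condition $1-4c^2N^3\in\squarep$ comes solely from condition (a): in the variable $w=\lambda_1T$ the cancelled factor is $cNw^2-w+cN^2$, and it must have no root of norm $N$.

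Your recipe would actually give the wrong answer here. The Wronskian of the unreduced pair is $N'(T)D(T)-N(T)D'(T)=-\lambda_1^p\lambda_2\,Q(T)^2$, so it reports the roots of $Q$ as two ``totally ramified points''. For $p\equiv2\pmod3$ the $X^3$-model criterion would then demand that they lie on $\mu_{p+1}$. That is precisely the situation in which $D$ vanishes on $\mu_{p+1}$ and $f_{\lambda_1,\lambda_2}$ is \emph{not} a permutation. For example, for $p=11$ the pair $(\lambda_1,\lambda_2)=(1,4)$ lies in family (ii) and gives a permutation, while the roots of $Q$ are $5,9\notin\mu_{12}$. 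Reduced degree $2$ (a linear common factor) must likewise be excluded separately, e.g.\ by the degree-two classification in odd characteristic.

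Beyond this, the decisive step is only announced (``should force'', ``we expect two branches''), so the core of the argument is missing. The paper supplies it by comparing coefficients of the symmetric quartic $H_{\lambda_1,\lambda_2}$, which has degree at most $2$ in each variable. A splitting into four lines forces \eqref{eq:E1}, \eqref{eq:E2}, i.e.\ reduced degree $1$. Two conics each fixed by $(X,Y)\mapsto(Y,X)$ force $\Res_T(N,D)=0$. Two exchanged conics are equivalent to $B^2F=AE^2$, which yields $c=\lambda_2/\lambda_1^3\in\Fp^*$, together with $ABD=2A^2E+B^3-2ABC$, which becomes $\lambda_1^3(3cN+1)(c^2N^3-cN^2-1)=0$. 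The second factor must then be \emph{discarded} as the degree-one locus, not kept as a second exceptional branch.

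After that, your ramification heuristic is consistent with family (iii). The two conics meet the diagonal in $x^2+3x+N=0$, where $x=\lambda_1X$. The condition $\Theta=(4N-9)/3\in\squarep$ says exactly that this quadratic has two distinct roots on the norm-$N$ circle when $p\equiv2\pmod3$, and two distinct roots off it when $p\equiv1\pmod3$. The paper checks this directly by restricting each conic to the circle. To repair your plan, split by reduced degree first, treat the degree-one locus through AGW condition (a), and carry out the factorization analysis in genuine degree $3$.
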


The proof follows the usual reduction from permutation polynomials of $\Fpp$ to rational
functions permuting the unit circle
\[
 \mu_{p+1}=\{t\in\Fpp^{*}:t^{p+1}=1\}.
\]
For our family, the induced rational function is
\[
 G(T)=\frac{T^3+\lambda_1^pT^2+\lambda_2^p}{1+\lambda_1T+\lambda_2T^3}.
\]

The reduced degree of $G$ gives the first division of the proof. Reduced degree $0$ and $2$
do not produce permutation polynomials. Reduced degree $1$ gives the second family in the
Main Theorem. In reduced degree $3$ we study the curve 
\[
 \frac{N(X)D(Y)-N(Y)D(X)}{X-Y}=0,
\]
where $N(T)=T^3+\lambda_1^pT^2+\lambda_2^p$ and
$D(T)=1+\lambda_1T+\lambda_2T^3$. 
 The only admissible
factorization in degree $3$ is a product of two conics exchanged by the involution
$(X,Y)\mapsto(Y,X)$, and this gives the third family in the main theorem.

The paper is organized as follows. In Section~\ref{sec:preliminaries} we recall the tools used
in the proof. In Section~\ref{sec:rational-functions} we associate the rational function
$G_{\lambda_1,\lambda_2}$ to $f_{\lambda_1,\lambda_2}$ and determine its reduced degree. In
Section~\ref{sec:degree-at-most-two} we settle the cases of reduced degree at most $2$.
Section~\ref{sec:degree-three} is devoted to the quartic curve arising in reduced degree $3$,
to the analysis of its possible factorizations, and to the proof of the main theorem.

\section{Preliminaries}\label{sec:preliminaries}
Throughout the paper $p$ denotes an odd prime and we let
\[
 \mu_{p+1}=\{t\in\F_{p^2}^*:t^{p+1}=1\}.
\]

An important tool for us will be the usual special case of the Akbary--Ghioca--Wang criterion, see \cite[Corollary 2.4]{AkbaryGhiocaWang}.
\begin{proposition}[AGW criterion]\label{prop:AGW}
Let $q$ be a prime power, let $d\mid(q-1)$, let $r\ge 1$, and let $h\in\F_q[x]$.
Set
\[
 F(x)=x^r h\bigl(x^{(q-1)/d}\bigr).
\]
Then $F$ permutes $\F_q$ if and only if
\begin{enumerate}[label=\rm(\roman*),leftmargin=1.5em]
\item $\gcd\bigl(r,(q-1)/d\bigr)=1$,
\item $h(\zeta)\ne 0$ for every $\zeta\in\mu_d$,
\item the map
\[
 \zeta\longmapsto \zeta^r h(\zeta)^{(q-1)/d}
\]
permutes $\mu_d$.
\end{enumerate}
\end{proposition}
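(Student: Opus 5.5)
The statement immediately above is the AGW criterion (Proposition~\ref{prop:AGW}), and the plan is the standard fibration argument. Write $s=(q-1)/d$ and consider the commutative square formed by $F:\F_q\to\F_q$, the map $\phi(x)=x^{s}$ from $\F_q^*$ onto $\mu_d$, and $g(\zeta)=\zeta^r h(\zeta)^{s}$ on $\mu_d$. For $x\in\F_q^*$,
\[
 F(x)^{s}=x^{rs}h(x^{s})^{s}=(x^{s})^r h(x^{s})^{s}=g(\phi(x)),
\]
so $\phi\circ F=g\circ\phi$ on $\F_q^*$. The element $0$ is handled separately: since $r\ge1$ we have $F(0)=0$. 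Hence $F$ permutes $\F_q$ if and only if $F$ permutes $\F_q^*$.

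\emph{Sufficiency.} Assume (i)--(iii). By (ii), $F(x)\ne0$ for every $x\ne0$, so $F$ maps $\F_q^*$ into itself, and it suffices to prove injectivity there. Suppose $F(x)=F(y)$ with $x,y\in\F_q^*$. Then $g(\phi(x))=g(\phi(y))$, and (iii) gives $\phi(x)=\phi(y)=:\zeta$. Write $y=x\omega$ with $\omega^{s}=1$. Then
\[
 F(y)=x^r\omega^r h(\zeta)=\omega^r F(x),
\]
so $\omega^r=1$ because $F(x)\ne0$. Since the order of $\omega$ divides $s$ and $\gcd(r,s)=1$ by (i), we get $\omega=1$, that is $x=y$.

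\emph{Necessity.} Assume $F$ permutes $\F_q$. Then $F(x)\ne0$ for $x\ne0$; since $\phi$ is onto $\mu_d$, this gives $h(\zeta)\ne0$ for every $\zeta\in\mu_d$, which is (ii). Because $F$ and $\phi$ are both surjective, the relation $\phi\circ F=g\circ\phi$ shows that $g$ is surjective, hence bijective on the finite set $\mu_d$, which is (iii). For (i), fix a fibre $\phi^{-1}(\zeta)=x\mu_{s}$. On this fibre, $F(x\omega)=\omega^r F(x)$. If $e=\gcd(r,s)>1$, pick $\omega\ne1$ with $\omega^{e}=1$; then $\omega^{s}=1$ and $\omega^r=1$, so $F(x\omega)=F(x)$, contradicting injectivity.

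I do not expect a genuine obstacle. The only care needed is the bookkeeping of fibres: each fibre of $\phi$ is a coset of $\mu_{s}$, and on it $F$ acts as multiplication by $\omega\mapsto\omega^r$ times the constant $F(x)$. Together with the special treatment of $x=0$, this gives all three implications.
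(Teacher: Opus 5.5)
Your proof is correct and complete. The paper gives no proof of this proposition; it quotes it from Akbary--Ghioca--Wang (Corollary 2.4). Your argument is the standard fibre argument behind that result: the identity $F(x)^{s}=g(x^{s})$, with $x\mapsto x^{s}$ mapping $\F_q^*$ onto $\mu_d$, together with injectivity of $F$ on each fibre $x\mu_s$, where $F(x\omega)=\omega^rF(x)$. The one detail you leave implicit in the necessity of (i), the existence of $\omega\neq1$ in $\F_q^*$ with $\omega^{e}=1$, follows at once from $e\mid s\mid q-1$.
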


Next we recall the notion of equivalence for rational functions.
\begin{definition}
Two rational functions $f,g\in\F_p(X)$ are called \emph{equivalent} if
there exist degree-one rational functions $\varphi,\psi\in\F_p(X)$ such that
\[
 g=\varphi\circ f\circ\psi.
\]
\end{definition}
Equivalent rational functions induce the same permutation behavior on
$\PP^1(\F_p)$.

We need the classification of permutation rational functions of degree $2$ and $3$.
The degree-$2$ case is classical and is recalled, for example, in
\cite[p.~3]{DingZieveLowDegree}; the degree-$3$ statement is due to Ferraguti and Micheli
\cite[Theorem 1.3]{FerragutiMicheli}.

\begin{proposition}[Permutation rational functions of degree $2$ and $3$]
\label{prop:PRF23}
Let $q$ be a prime power.
\begin{enumerate}[label=\rm(\arabic*),leftmargin=1.5em]
\item A degree $1$ rational function of the form $(aX+b)/(cX+d)$ with $a,b,c,d\in\F_q$ permutes $\PP^1(\F_q)$ if and only if $ad-bc\ne0$.
\item A degree-two rational function in $\F_q(X)$ permutes $\PP^1(\F_q)$ if and only if
$q$ is even and the function is equivalent to $X^2$.
\item Assume $3\nmid q$. A degree-three rational function in $\F_q(X)$ permutes
$\PP^1(\F_q)$ if and only if it is equivalent to one of the following:
\begin{enumerate}[label=\rm(\alph*),leftmargin=1.5em]
\item $X^3$, if $q\equiv 2\pmod 3$;
\item $\nu^{-1}\circ X^3\circ\nu$, if $q\equiv 1\pmod 3$, where for some
$\delta\in\F_{q^2}\setminus\F_q$ one has
\[
 \nu(X)=\frac{X-\delta^q}{X-\delta},
 \qquad
 \nu^{-1}(X)=\frac{\delta X-\delta^q}{X-1}.
\]
\end{enumerate}
\end{enumerate}
\end{proposition}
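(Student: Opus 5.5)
The natural approach is to treat the three degrees separately. The aim is to reduce each case, via the equivalence relation just defined, to a normal form whose action on $\PP^1(\F_q)$ can be read off directly. Equivalence preserves the permutation property, because degree-one maps defined over $\F_q$ are bijections of $\PP^1(\F_q)$.

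\textbf{Degree $1$.} The function $(aX+b)/(cX+d)$ is constant exactly when $ad-bc=0$. Otherwise it is an invertible M\"obius transformation of $\PP^1(\F_q)$, with inverse $(dX-b)/(-cX+a)$. This settles part (1).

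\textbf{Degree $2$.} First suppose $q$ is even. Then $f$ is either inseparable or separable with a single wildly ramified point.
\begin{itemize}
\item In the inseparable case $f=g(X^2)$ with $g$ of degree $1$, so $f$ is equivalent to $X^2$, which is a bijection in characteristic $2$.
\item In the separable case, move the unique ramification point and its image to $\infty$. Then $f$ becomes an additive polynomial of the form $X^2+aX$ with $a\neq0$, and this has kernel $\{0,a\}$.
\end{itemize}
Now suppose $q$ is odd. Then $f$ has exactly two ramification points, which form a Frobenius-stable set.
\begin{itemize}
\item If both are rational, compose with M\"obius maps over $\F_q$ sending them and their images to $0,\infty$. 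This gives $f\sim X^2$, which is $2$-to-$1$ on $\F_q^*$.
\item If they are conjugate, say $\delta,\delta^q$, conjugate by $\nu$ as in the statement. Since $\nu$ maps $\PP^1(\F_q)$ onto $\mu_{q+1}$, the function becomes squaring on $\mu_{q+1}$. This is $2$-to-$1$ because $q+1$ is even.
\end{itemize}
This proves part (2).

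\textbf{Degree $3$, normal forms.} Since $3\nmid q$, every degree-three $f$ is separable and tamely ramified, and Riemann--Hurwitz gives total ramification index $4$. I would use the monodromy viewpoint. Let $G\le A\le S_3$ be the geometric and arithmetic monodromy groups of $f(X)-t$ over $\F_q(t)$.
\begin{itemize}
\item If $G=C_3$, the cover is Galois with two totally ramified points. Over $\overline{\F}_q$ one then has $f\sim X^3$, and the two ramification points, together with their two images, are either both rational or a conjugate pair.
\item In the rational case $f\sim X^3$ over $\F_q$, which permutes exactly when $\gcd(3,q-1)=1$, i.e. $q\equiv2\pmod3$.
\item In the conjugate case, conjugating by $\nu$ turns $f$ into cubing on $\mu_{q+1}$. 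This permutes exactly when $3\nmid q+1$, i.e. $q\equiv1\pmod 3$.
\end{itemize}
These two subcases give (a) and (b), and the converse directions are immediate from the same computations.

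\textbf{Degree $3$, the main obstacle.} It remains to exclude $G=S_3$. Then $A=G$, and the curve $(f(X)-f(Y))/(X-Y)=0$ is an absolutely irreducible conic over $\F_q$. By a direct point count (a conic, so no Weil error term is needed) it has about $q+1$ rational points. Only at most a bounded number of these lie on the diagonal $X=Y$, which the quotient meets only at ramification points, or at infinity. Hence for every $q$ at least a handful of points with $X\neq Y$ survive, so $f$ is not injective.

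I expect this counting step to be the main difficulty. One must check carefully that the diagonal points and the points at infinity cannot exhaust all $q+1$ points for very small $q$, such as $q=2,4,5,7$. If the uniform count is too crude there, I would fall back on explicit normal forms for degree-three rational functions over those small fields, as Ferraguti--Micheli do.
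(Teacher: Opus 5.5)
The paper gives no proof of this proposition: part (2) is quoted from Ding--Zieve and part (3) from Ferraguti--Micheli, so your argument has to stand on its own. Your treatment of degree $1$, of degree $2$, and of the branch $G=C_3$ in degree $3$ is sound. When the ramification points are a conjugate pair $\delta,\delta^q$, their images form a possibly different conjugate pair. Since $\mathrm{PGL}_2(\F_q)$ is transitive on $\F_{q^2}\setminus\F_q$, ``conjugating by $\nu$'' is still legitimate. One claim is false, though. The hypothesis $3\nmid q$ allows even $q$, and then every point with $e_P=2$ is \emph{wildly} ramified, so not every cubic is tame. This is harmless for $G=C_3$, since a Galois cover of degree $3$ is tame. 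It does mean your Riemann--Hurwitz bookkeeping is wrong for the $S_3$ configurations in characteristic $2$.

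The genuine gap is the exclusion of $G=S_3$. For $f=N/D$ of degree $3$, the curve $\bigl(N(X)D(Y)-N(Y)D(X)\bigr)/(X-Y)=0$ is \emph{not} a conic. It has bidegree $(2,2)$: it is a quartic with an $X^2Y^2$ term and arithmetic genus $1$, exactly like the paper's own $H_{\lambda_1,\lambda_2}$. It is a conic only when $f$ is a polynomial, which up to equivalence means $f$ has a totally ramified point. Without such a point its normalization is the Galois closure and can have genus $1$; this happens, for instance, with four simple branch points in odd characteristic. The point count is then $q+1-a$ with $|a|\le 2\sqrt q$, so a Weil error term is needed after all. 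For every admissible $q\le 8$ the bound $q+1-2\sqrt q$ does not exceed the up to four rational diagonal points, so your count does not close.

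The missing idea is this: if a rational point $P$ has $e_P=2$, the other point $P'$ of its fibre is also Frobenius-fixed, so $f(P)=f(P')$ with $P\ne P'$ rational, and $f$ is not a permutation. With this there are three cases.
\begin{itemize}
\item Some point with $e_P=2$ is rational. Then $f$ is not a permutation, by the observation.
\item $f$ has a point with $e=3$. This point is unique, hence rational. Sending it and its image to $\infty$ makes $f$ a cubic polynomial, so your conic argument applies. The affine diagonal points of that conic are ramification points with $e=2$, which by the first case you may assume are not rational. So its $q+1$ rational points minus at most two at infinity leave a collision.
\item No ramification point is rational. Then the $(2,2)$ curve has no rational diagonal point. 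Its normalization is absolutely irreducible (since $A=G$) of genus at most $1$, so it has a rational point. The image of that point is an off-diagonal collision.
\end{itemize}
This covers every $q$ with $3\nmid q$, with no fallback to explicit normal forms.
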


Finally we recall the version of the Aubry--Perret bound that will be used in the investigation of a quartic associated to $f_{\lambda_1,\lambda_2}$. 

\begin{proposition}[Aubry--Perret bound, \cite{AubryPerret}]\label{prop:AP}
Let \(\mathcal X\subseteq\PP^2\) be an absolutely irreducible projective plane
curve defined over \(\Fp\), of degree \(d\). Then
\[
 \#\mathcal X(\Fp)
 \ge
 p+1-(d-1)(d-2)\sqrt p.
\]
\end{proposition}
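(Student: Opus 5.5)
The plan is to reduce the statement to the Hasse--Weil bound for the normalization, together with a comparison between points on the plane model and places of the function field. Let $\mathcal X\subseteq\PP^2$ be absolutely irreducible of degree $d$, defined over $\Fp$, and let $\widetilde{\mathcal X}\to\mathcal X$ be its normalization, a smooth projective curve over $\Fp$ of genus $g$.

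The first step is the genus bound $g\le (d-1)(d-2)/2$. This follows from the genus formula for plane curves:
\[
 g=\frac{(d-1)(d-2)}{2}-\sum_{P}\delta_P,
\]
where each $\delta_P\ge 0$. Equivalently, one can argue with adjoint curves, counting conditions on curves of degree $d-3$.

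The second step applies the Weil bound to the smooth model:
\[
 \#\widetilde{\mathcal X}(\Fp)\ge p+1-2g\sqrt p\ge p+1-(d-1)(d-2)\sqrt p .
\]

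The third step, which is the main obstacle, is to compare $\#\mathcal X(\Fp)$ with $\#\widetilde{\mathcal X}(\Fp)$. The naive comparison goes the wrong way: several rational places may lie over one singular rational point. To handle this, I will sharpen the first step by keeping the singular contribution. Let $S$ be the set of singular points. For each $P\in S$, let $r_P$ be the number of $\Fp$-rational branches over $P$. The key local inequality is $r_P-1\le\delta_P$, since $\delta_P\ge m_P(m_P-1)/2$ and $r_P\le m_P$, where $m_P$ is the multiplicity. Then
\[
 \#\mathcal X(\Fp)\ge\#\widetilde{\mathcal X}(\Fp)-\sum_{P\in S(\Fp)}(r_P-1)\ge p+1-2g\sqrt p-\sum_P\delta_P .
\]
Writing $\Delta=\sum_P\delta_P=\frac{(d-1)(d-2)}2-g$, this gives
\[
 \#\mathcal X(\Fp)\ge p+1-2g\sqrt p-\Delta\ge p+1-2\Bigl(g+\Delta\Bigr)\sqrt p ,
\]
because $\Delta\le 2\Delta\sqrt p$ when $p\ge 1$. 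Since $2(g+\Delta)=(d-1)(d-2)$, the claimed bound $\#\mathcal X(\Fp)\ge p+1-(d-1)(d-2)\sqrt p$ follows.

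The delicate points are the justification of $r_P-1\le\delta_P$, which I will take from the standard local computation that $\delta_P\ge\sum_{\text{branches}}(\cdot)$ plus the intersection contributions between branches, and the bookkeeping for non-rational singular points. Non-rational singular points only help, since they contribute no rational points upstairs or downstairs.
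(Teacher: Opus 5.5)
Your proof is correct. The paper gives no argument for this statement; it quotes it as a black box from \cite{AubryPerret}. There it is derived, for arbitrary absolutely irreducible projective curves, from an estimate of the form $|\#X(\F_q)-(q+1)|\le 2g\sqrt q+\pi-g$, where $\pi$ is the arithmetic genus; for a plane curve of degree $d$ one has $\pi=(d-1)(d-2)/2$.

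Your route is a self-contained proof of the lower half of this estimate in the plane case. It combines Weil's bound on the normalization, the genus formula $g=\frac{(d-1)(d-2)}{2}-\Delta$, and the identity $\#\mathcal X(\Fp)=\#\widetilde{\mathcal X}(\Fp)-\sum_{P\in S(\Fp)}(r_P-1)$, together with the local inequality $r_P-1\le\delta_P$. Your justification of that inequality is sound. A general line through $P$ meets $\mathcal X$ at $P$ with multiplicity $m_P$, which is a sum of positive orders over the branches at $P$, so $r_P\le m_P$. Also $\delta_P\ge m_P(m_P-1)/2\ge m_P-1$ for $m_P\ge 2$. The case $r_P=0$ only helps, and non-rational singular points are absorbed into $\Delta$.

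Your intermediate estimate $\#\mathcal X(\Fp)\ge p+1-2g\sqrt p-\Delta$ is in fact the sharper Aubry--Perret form; the step $\Delta\le 2\Delta\sqrt p$ only weakens it to the stated bound. What the citation gives beyond your argument is non-plane curves and a two-sided estimate, and neither is needed here, since the paper only uses the lower bound for a plane quartic.

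If you want a local step that does not rely on plane multiplicities, use the integral closure $\widetilde{\mathcal O}_P$ of $\mathcal O_P$. For a rational point $P$, the quotient $\widetilde{\mathcal O}_P/\mathcal O_P$ surjects onto $\bigl(\prod_{Q\mid P}k(Q)\bigr)/\Fp$. This gives $\delta_P\ge\sum_{Q\mid P}\deg Q-1\ge r_P-1$ for any curve.
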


\section{Rational functions associated to $f$}\label{sec:rational-functions}

In this section we apply the AGW criterion and introduce the rational function whose
permutation behaviour controls the polynomial $f_{\lambda_1,\lambda_2}$.

\begin{proposition}\label{prop:agw-reduction}
Let $f_{\lambda_1,\lambda_2}$ be defined by \eqref{eq:fdef}. Then
$f_{\lambda_1,\lambda_2}$ permutes $\Fpp$ if and only if the following two conditions hold:
\begin{enumerate}[label=\rm(\alph*),leftmargin=1.5em]
\item
\begin{equation}\label{eq:denominator-condition}
 1+\lambda_1v+\lambda_2v^3\neq0
 \qquad\text{for every }v\in\mu_{p+1};
\end{equation}
\item the rational function
\begin{equation}\label{eq:Gdef}
 G_{\lambda_1,\lambda_2}(T)=
 \frac{T^3+\lambda_1^pT^2+\lambda_2^p}{1+\lambda_1T+\lambda_2T^3}
\end{equation}
permutes $\mu_{p+1}$.
\end{enumerate}
\end{proposition}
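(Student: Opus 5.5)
We write $f_{\lambda_1,\lambda_2}$ in the shape required by \cref{prop:AGW}, with $q=p^2$ and $d=p+1$, so that $(q-1)/d=p-1$. The exponents are
\[
p^2-p+1=(p-1)p+1,\qquad p^2=(p-1)(p+1)+1,\qquad 2p-1=(p-1)\cdot 1+p .
\]
Factoring out the smallest power $x^{p}$ is awkward for the first two terms. We therefore take $r=2p-1$, which is congruent to $1$ modulo $p-1$. Then
\[
x^{p^2-p+1}=x^{2p-1}\,x^{(p-1)(p-1)},\qquad x^{p^2}=x^{2p-1}\,x^{(p-1)p},\qquad x^{2p-1}=x^{2p-1}\,x^{0}.
\]
Hence $f_{\lambda_1,\lambda_2}(x)=x^{2p-1}h\bigl(x^{p-1}\bigr)$ with
\[
h(T)=T^{p-1}+\lambda_1T^{p}+\lambda_2 .
\]
We now check the three conditions of \cref{prop:AGW} in turn.

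\textbf{Condition (i).} Since $2p-1\equiv 1 \pmod{p-1}$, we have $\gcd(2p-1,p-1)=\gcd(1,p-1)=1$. So (i) always holds.

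\textbf{Condition (ii).} For $\zeta\in\mu_{p+1}$ we have $\zeta^{p}=\zeta^{-1}$. This gives
\[
h(\zeta)=\zeta^{-2}+\lambda_1\zeta^{-1}+\lambda_2=\zeta^{-3}\bigl(\zeta+\lambda_1\zeta^{2}+\lambda_2\zeta^{3}\bigr)=\zeta^{-2}\bigl(1+\lambda_1\zeta+\lambda_2\zeta^{2}\bigr).
\]
This does not match \eqref{eq:denominator-condition} exactly, so the parametrization has to be adjusted. The natural fix is to substitute $v=\zeta^{-1}$ or $v=\zeta^{p}$, or to use a different choice of $r$, so that the cubic $1+\lambda_1v+\lambda_2v^3$ appears. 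This bookkeeping is the step I expect to be the main (if minor) obstacle: choosing $r$ and the variable so that the AGW map comes out literally as $G_{\lambda_1,\lambda_2}$.

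A cleaner route is to take $r=1$ and write $f=x\,h_1\bigl(x^{p-1}\bigr)$. Here $p^2-p=(p-1)p$, $p^2-1=(p-1)(p+1)$ and $2p-2=2(p-1)$, so
\[
h_1(T)=T^{p}+\lambda_1T^{p+1}+\lambda_2T^{2}.
\]
Condition (i) is then trivial. Since $\zeta^{p+1}=1$, on $\mu_{p+1}$ we get $h_1(\zeta)=\zeta^{-1}+\lambda_1+\lambda_2\zeta^{2}=\zeta^{-1}\bigl(1+\lambda_1\zeta+\lambda_2\zeta^{3}\bigr)$. Thus (ii) is exactly \eqref{eq:denominator-condition}.

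\textbf{Condition (iii).} Put $D(\zeta)=1+\lambda_1\zeta+\lambda_2\zeta^{3}$. We must compute $\zeta\,h_1(\zeta)^{p-1}=\zeta\,h_1(\zeta)^{p}/h_1(\zeta)$, using Frobenius and $\zeta^{p}=\zeta^{-1}$. First,
\[
h_1(\zeta)^{p}=\zeta\bigl(1+\lambda_1^{p}\zeta^{-1}+\lambda_2^{p}\zeta^{-3}\bigr)=\zeta^{-2}\bigl(\zeta^{3}+\lambda_1^{p}\zeta^{2}+\lambda_2^{p}\bigr).
\]
Dividing by $h_1(\zeta)=\zeta^{-1}D(\zeta)$ and multiplying by $\zeta$ gives
\[
\zeta\,h_1(\zeta)^{p-1}=\zeta\cdot\zeta^{-2}\bigl(\zeta^{3}+\lambda_1^{p}\zeta^{2}+\lambda_2^{p}\bigr)\cdot\frac{\zeta}{D(\zeta)}=\frac{\zeta^{3}+\lambda_1^{p}\zeta^{2}+\lambda_2^{p}}{1+\lambda_1\zeta+\lambda_2\zeta^{3}}=G_{\lambda_1,\lambda_2}(\zeta).
\]
This is well defined precisely when (a) holds. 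Therefore (iii) says exactly that $G_{\lambda_1,\lambda_2}$ permutes $\mu_{p+1}$.

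Combining the three conditions through \cref{prop:AGW} yields the equivalence. The only real care needed is the exponent bookkeeping. Once $r=1$ is chosen, everything reduces to routine use of $\zeta^{p}=\zeta^{-1}$.
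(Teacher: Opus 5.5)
Your final $r=1$ argument is correct and is essentially the paper's proof: you apply AGW with $q=p^2$, $d=p+1$, $r=1$, then use $\zeta^p=\zeta^{-1}$ and Frobenius to get $h_1(\zeta)=\zeta^{-1}(1+\lambda_1\zeta+\lambda_2\zeta^3)$ and $\zeta h_1(\zeta)^{p-1}=G_{\lambda_1,\lambda_2}(\zeta)$. You should delete the $r=2p-1$ detour, because its exponent identities are false: the correct cofactor is $T^{p-2}+\lambda_1T^{p-1}+\lambda_2$, which gives $h(\zeta)=\zeta^{-3}(1+\lambda_1\zeta+\lambda_2\zeta^3)$ and would have worked equally well, so the ``mismatch'' you found there came from an arithmetic slip, not from a real problem with that choice of $r$.
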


\begin{proof}
For $x\in\Fpp^{*}$ put $t=x^{p-1}$. Then $t\in\mu_{p+1}$ and
\[
 x^{p^2-p+1}=x(x^{p-1})^p=xt^p,
 \qquad
 x^{p^2}=x,
 \qquad
 x^{2p-1}=x(x^{p-1})^2=xt^2.
\]
Thus
\[
 f_{\lambda_1,\lambda_2}(x)=x\bigl(t^p+\lambda_1+\lambda_2t^2\bigr).
\]
Since $t^p=t^{-1}$ on $\mu_{p+1}$, this is
\[
 f_{\lambda_1,\lambda_2}(x)=x h(t),
 \qquad
 h(t)=t^{-1}+\lambda_1+\lambda_2t^2.
\]
Applying \cref{prop:AGW} with $q=p^2$, $d=p+1$, and $r=1$, we see that
$f_{\lambda_1,\lambda_2}$ permutes $\Fpp$ if and only if $h$ has no zeros on
$\mu_{p+1}$ and the map
\[
 t\longmapsto t h(t)^{p-1}
\]
permutes $\mu_{p+1}$. Now
\[
 t h(t)^{p-1}=t\frac{h(t)^p}{h(t)}.
\]
Using $t^p=t^{-1}$ we have
\[
 h(t)^p=t+\lambda_1^p+\lambda_2^pt^{-2},
\]
whence
\[
 t\frac{h(t)^p}{h(t)}=
 \frac{t^3+\lambda_1^pt^2+\lambda_2^p}{1+\lambda_1t+\lambda_2t^3},
\]
which is \eqref{eq:Gdef}. Finally, $th(t)=1+\lambda_1t+\lambda_2t^3$, so the no-zero
condition for $h$ is exactly \eqref{eq:denominator-condition}.
\end{proof}

Put
\[
 N(T)=T^3+\lambda_1^pT^2+\lambda_2^p,
 \qquad
 D(T)=1+\lambda_1T+\lambda_2T^3.
\]

By direct computation the resultant of $N$ and $D$ with respect to $T$ is
\begin{align}\label{eq:resultant-ND}
\operatorname{Res}_T(N,D)
={}&
-\lambda_1^{3p}\lambda_2
-\lambda_1^{2p+2}\lambda_2^{p+1}
+2\lambda_1^{p+1}\lambda_2^{2p+2} \notag\\
&-\lambda_1^{p+1}\lambda_2^{p+1}
-\lambda_1^{p+1}
-\lambda_2^{3p+3}
+3\lambda_2^{2p+2} \notag\\
&-\lambda_1^3\lambda_2^p
-3\lambda_2^{p+1}
+1.
\end{align}

We denote by $\rdeg G_{\lambda_1,\lambda_2}$ the reduced degree of $G_{\lambda_1,\lambda_2}$, that is the degree of $N/D$ after cancellation of
common factors. The following proposition deals with the four possible reduced degrees.

\begin{proposition}\label{prop:reduced-degree}
Let $G=G_{\lambda_1,\lambda_2}$. Then the following hold.
\begin{enumerate}[label=\rm(\roman*),leftmargin=1.5em]
\item $\rdeg G=0$ if and only if
\[
 \lambda_1=0,
 \qquad
 \lambda_2^{p+1}=1.
\]

\item $\rdeg G=1$ if and only if $\lambda_1\neq0$ and
\begin{equation}\label{eq:E1}
 \lambda_1^{2p}\lambda_2-\lambda_1\lambda_2^{p+1}+\lambda_1=0,
\end{equation}
\begin{equation}\label{eq:E2}
 \lambda_1^{2p+1}\lambda_2
 -\lambda_1^p\lambda_2^{p+2}
 +\lambda_1^p\lambda_2+\lambda_1^2=0.
\end{equation}

\item $\rdeg G=2$ if and only if the resultant $\Res_T(N,D)=0$ and neither of
the previous two cases occurs.

\item $\rdeg G=3$ if and only if $\Res_T(N,D)\neq0$.
\end{enumerate}
\end{proposition}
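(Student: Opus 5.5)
The plan is to compute the reduced degree from the size of $\gcd(N,D)$, namely $\rdeg G=3-\deg\gcd(N,D)$ up to degree effects at infinity. First we have to be careful about degrees. $N$ is monic of degree $3$, while $D$ has degree $3$ only when $\lambda_2\neq0$. So we use the homogeneous/projective version: regard $G$ as a map on $\PP^1$ given by the pair $(N,D)$ of degree-$3$ forms. The reduced degree is then $3$ minus the degree of the common factor of the two binary cubics. The Sylvester resultant of $N$ and $D$, taken formally as cubics, vanishes exactly when they have a common root in $\PP^1(\overline{\F}_p)$. Since $N$ has leading coefficient $1$, the point at infinity is never a common root, so $\Res_T(N,D)$ in \eqref{eq:resultant-ND} detects an affine common root. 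This gives (iv) immediately. It also shows that $\rdeg G\le 2$ exactly when $\Res=0$, which reduces (iii) to (i) and (ii).

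For (i), we need $\rdeg G=0$, that is, $N$ and $D$ proportional as cubics: $D=\kappa N$. Comparing coefficients gives $\lambda_2=\kappa$, $0=\kappa\lambda_1^p$, $\lambda_1=0$ and $1=\kappa\lambda_2^p$. Hence $\lambda_1=0$ and $\lambda_2^{p+1}=1$. Conversely, these values make $D=\lambda_2 N$.

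For (ii), we need $\rdeg G=1$, that is, $\gcd(N,D)$ is a quadratic $Q$. Then $N=(T-a)Q$ and $D=\lambda_2(T-b)Q$, or $D=\mu Q$ if $\lambda_2=0$, where $\lambda_2=0$ forces $D$ of degree $\le 1$, impossible unless $\lambda_1$ is special. Rather than parametrize, I would use the standard criterion: $\deg\gcd\ge2$ if and only if the first subresultant vanishes. Equivalently, the $4\times 4$ minors of the Sylvester-type matrix of $(N,D)$ with shifts $\{N,TN,D,TD\}$ must all vanish, i.e. that matrix has rank $\le 3$. I would compute the first subresultant $S_1(T)=s_{1}T+s_0$, whose coefficients are polynomials in $\lambda_1,\lambda_1^p,\lambda_2,\lambda_2^p$. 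Then I would check that, after removing a harmless factor, $s_1$ and $s_0$ are exactly the left-hand sides of \eqref{eq:E1} and \eqref{eq:E2}. The condition $\lambda_1\ne0$ excludes the degree-$0$ case: with $\lambda_1=0$ both equations hold trivially, and we must separate. Indeed, if $\lambda_1=0$ then $N=T^3+\lambda_2^p$ and $D=1+\lambda_2T^3$, and these share a quadratic factor only when they are proportional. So $\lambda_1=0$ never gives reduced degree exactly $1$, which is consistent with stating (ii) with $\lambda_1\neq0$.

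The main obstacle is the subresultant computation in (ii): identifying its coefficients with \eqref{eq:E1} and \eqref{eq:E2}, up to factors such as $\lambda_1$ or powers that can be cancelled using $\lambda_1\ne 0$. This is where I expect the real work. As a cross-check, I would use a direct parametrization. Write $Q=T^2+uT+v$ and $N=(T-a)Q$, which gives $u-a=\lambda_1^p$, $v-au=0$ and $-av=\lambda_2^p$. Match $D$ with $(\lambda_2T+e)Q$, which gives $\lambda_2u+e=0$, $\lambda_2 v+eu=\lambda_1$ and $ev=1$. Then eliminate $a,u,v,e$. This elimination should reproduce \eqref{eq:E1} and \eqref{eq:E2}, and the converse direction follows by reversing the linear elimination steps when $\lambda_1\ne0$. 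Finally, (iii) is just the complement: $\Res=0$ but neither (i) nor (ii) holds.
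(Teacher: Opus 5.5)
\textbf{Verdict: there is a genuine gap, in the ``if'' direction of (ii).} Your treatment of (i), (iii) and (iv) is correct and follows the paper. Your remark that $N$ is monic, so $\infty$ is never a common root and the formal resultant detects exactly the affine common roots, is a point the paper passes over.

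\textbf{The gap.} The criterion you invoke, ``$\deg\gcd(N,D)\ge2$ if and only if the first subresultant vanishes'', is false in general. It is also not equivalent to the rank condition you state next. The matrix with rows $N,TN,D,TD$ is $4\times5$ and has five maximal minors, while $S_1=s_1T+s_0$ records only two of them: the minors omitting the $T^0$ and $T^1$ columns. The correct subresultant criterion is $\mathrm{psc}_0=\Res_T(N,D)=0$ \emph{and} $\mathrm{psc}_1=s_1=0$.

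Your own case $\lambda_1=0$ is a counterexample. Write $E_1,E_2$ for the left-hand sides of \eqref{eq:E1}, \eqref{eq:E2}. Then in fact $s_0=\pm E_1$ and $s_1=\pm E_2$ exactly. Both vanish identically when $\lambda_1=0$, yet for $\lambda_2^{p+1}\neq1$ the polynomials $T^3+\lambda_2^p$ and $1+\lambda_2T^3$ are coprime. So matching $s_0,s_1$ with \eqref{eq:E1}, \eqref{eq:E2} only proves that $\rdeg G=1$ forces these equations. You still need an argument that $\lambda_1\neq0$ together with \eqref{eq:E1}, \eqref{eq:E2} forces a common quadratic factor, and that is precisely where $\lambda_1\neq0$ has to be used.

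\textbf{A short repair in your framework.} The other three maximal minors are $E_1^p$ and $E_2^p$ (after reducing exponents with $\lambda_i^{p^2}=\lambda_i$), and $(1-\lambda_2^{p+1})^2-\lambda_1^{p+1}\lambda_2^{p+1}$. The last is the minor that survives at $\lambda_1=0$. Now assume $\lambda_1\neq0$.
\begin{enumerate}
\item \eqref{eq:E1} gives $\lambda_2\neq0$ and $1-\lambda_2^{p+1}=-\lambda_1^{2p-1}\lambda_2$.
\item Substituting this into $E_1^p=\lambda_1^p(1-\lambda_2^{p+1})+\lambda_1^2\lambda_2^p=0$ gives $\lambda_2^{p-1}=\lambda_1^{3p-3}$.
\item Hence $(1-\lambda_2^{p+1})^2-\lambda_1^{p+1}\lambda_2^{p+1}=\lambda_1^{p+1}\lambda_2^2\bigl(\lambda_1^{3p-3}-\lambda_2^{p-1}\bigr)=0$.
\end{enumerate}
So all five minors vanish, the matrix has rank at most $3$, and $\deg\gcd(N,D)\ge2$.

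\textbf{On your parametrization cross-check.} This is essentially the paper's route, but as you set it up the system is not linear. The unknowns $a,u,v,e$ appear in products $au$, $av$, $eu$, $ev$, so there are no ``linear elimination steps'' to reverse. The paper linearizes it with one observation. For $\lambda_1\lambda_2\neq0$, the polynomial $D-\lambda_2N=-\lambda_1^p\lambda_2T^2+\lambda_1T+1-\lambda_2^{p+1}$ has degree exactly two. Any common quadratic factor must therefore be its monic normalization $Q$. Then \eqref{eq:E1}, \eqref{eq:E2} are exactly the conditions $Q\mid N$, and $Q\mid N$ together with $Q\mid D-\lambda_2N$ gives $Q\mid D$.

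\textbf{A smaller point.} For $\lambda_2=0$ the nonzero polynomial $D$ has degree at most one, so it never has a quadratic factor. There is no exceptional value of $\lambda_1$ here.
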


\begin{proof}
The reduced degree of \(G_{\lambda_1,\lambda_2}\) is
\(
3-\deg(\gcd(N,D)).
\)

First, the reduced degree is zero if and only if \(N\) and \(D\) are
proportional. Thus there exists \(c\in\mathbb F_{p^2}^{\star}\) such that
\[
 N(T)=cD(T).
\]
Comparing the coefficients of \(T^3,T^2,T\), and the constant term gives
\[
 1=c\lambda_2,\qquad
 \lambda_1^p=0,\qquad
 0=c\lambda_1,\qquad
 \lambda_2^p=c.
\]
Hence \(\lambda_1=0\) and
\[
 1=c\lambda_2=\lambda_2^{p+1}.
\]
Conversely, if \(\lambda_1=0\) and \(\lambda_2^{p+1}=1\), then
\[
 N(T)=T^3+\lambda_2^p
      =
      \lambda_2^p(1+\lambda_2T^3)
      =
      \lambda_2^pD(T),
\]
and so the reduced degree is zero.

We now determine when the reduced degree is one. This is equivalent to
\[
 \deg\gcd(N,D)=2.
\]
In this case \(\lambda_2\neq0\), since otherwise \(D\) has degree at most one.
Also, \(\lambda_1\neq0\). Indeed, if \(\lambda_1=0\), then
\[
 N(T)=T^3+\lambda_2^p,\qquad
 D(T)=1+\lambda_2T^3,
\]
and these two polynomials are either proportional, when
\(\lambda_2^{p+1}=1\), or coprime. Therefore they cannot have a common factor of
degree two.

Assume therefore that \(\lambda_1\lambda_2\neq0\). Let
\[
 Q(T)=T^2+uT+v
\]
be the common monic quadratic factor. Since \(N\) is monic and the leading
coefficient of \(D\) is \(\lambda_2\), there exist \(a,d\in\Fpp\) such that
\[
 N(T)=(T+a)Q(T),
 \qquad
 D(T)=(\lambda_2T+d)Q(T).
\]
Subtracting \(\lambda_2N(T)\) from \(D(T)\), we get
\[
 D(T)-\lambda_2N(T)
 =
 (d-\lambda_2a)Q(T).
\]
On the other hand,
\[
\begin{aligned}
D(T)-\lambda_2N(T)
&=
1+\lambda_1T+\lambda_2T^3
-\lambda_2(T^3+\lambda_1^pT^2+\lambda_2^p)\\
&=
-\lambda_1^p\lambda_2T^2+\lambda_1T+1-\lambda_2^{p+1}.
\end{aligned}
\]
Since \(\lambda_1\lambda_2\neq0\), the coefficient of \(T^2\) is nonzero.
Therefore
\[
 d-\lambda_2a=-\lambda_1^p\lambda_2,
\]
and comparison of the remaining coefficients gives
\[
 u=-\frac{\lambda_1}{\lambda_1^p\lambda_2},
 \qquad
 v=\frac{\lambda_2^{p+1}-1}{\lambda_1^p\lambda_2}.
\]
Now compare the coefficients in
\[
 N(T)=(T+a)(T^2+uT+v).
\]
Expanding gives
\[
 (T+a)(T^2+uT+v)
 =
 T^3+(a+u)T^2+(v+au)T+av.
\]
Thus
\[
 a+u=\lambda_1^p,\qquad
 v+au=0,\qquad
 av=\lambda_2^p.
\]
From \(a+u=\lambda_1^p\), using the value of \(u\), we obtain
\[
 a=\lambda_1^p+\frac{\lambda_1}{\lambda_1^p\lambda_2}.
\]
It remains to impose the last two coefficient equations. Substituting the above
values of \(a,u,v\), we get
\[
 v+au
 =
 -
 \frac{
 \lambda_1^{2p+1}\lambda_2
 -
 \lambda_1^p\lambda_2^{p+2}
 +
 \lambda_1^p\lambda_2
 +
 \lambda_1^2
 }
 {\lambda_1^{2p}\lambda_2^2},
\]
and
\[
 av-\lambda_2^p
 =
 -
 \frac{
 \lambda_1^{2p}\lambda_2
 -
 \lambda_1\lambda_2^{p+1}
 +
 \lambda_1
 }
 {\lambda_1^{2p}\lambda_2^2}.
\]
Consequently, \(Q(T)\) divides both \(N(T)\) and \(D(T)\) if and only if
\[
 \lambda_1^{2p}\lambda_2
 -
 \lambda_1\lambda_2^{p+1}
 +
 \lambda_1
 =
0
\]
and
\[
 \lambda_1^{2p+1}\lambda_2
 -
 \lambda_1^p\lambda_2^{p+2}
 +
 \lambda_1^p\lambda_2
 +
 \lambda_1^2
 =
0.
\]
These are precisely \eqref{eq:E1} and \eqref{eq:E2}.

Conversely, assume that \(\lambda_1\neq0\) and that \eqref{eq:E1},
\eqref{eq:E2} hold. Then \(\lambda_2\neq0\), since \eqref{eq:E1} with
\(\lambda_2=0\) would give \(\lambda_1=0\). Define
\[
 u=-\frac{\lambda_1}{\lambda_1^p\lambda_2},
 \qquad
 v=\frac{\lambda_2^{p+1}-1}{\lambda_1^p\lambda_2},
\]
and
\[
 a=\lambda_1^p-u.
\]
The two equations \eqref{eq:E1}, \eqref{eq:E2} are exactly the conditions
\[
 av=\lambda_2^p,
 \qquad
 v+au=0.
\]
Therefore
\[
 N(T)=(T+a)(T^2+uT+v).
\]
Moreover,
\[
 D(T)-\lambda_2N(T)
 =
 -\lambda_1^p\lambda_2(T^2+uT+v),
\]
and hence \(D(T)\) is also divisible by \(T^2+uT+v\). Thus
\[
 \deg\gcd(N,D)=2.
\]
Since \(\lambda_1\neq0\), \(N\) and \(D\) are not proportional, so the reduced
degree is exactly one.

Finally, the last two cases are controlled by the resultant. One has
\[
 \operatorname{Res}_T(N,D)=0
\]
if and only if \(N\) and \(D\) have a nonconstant common factor. After the cases
of common factor of degree \(3\) and \(2\) have been removed, this common factor
has degree \(1\), and the reduced degree is \(2\). If the resultant is nonzero,
then no cancellation occurs and the reduced degree is \(3\).
\end{proof}

\section{Reduced degree at most two}\label{sec:degree-at-most-two}

We now determine the permutation behaviour when $\rdeg G_{\lambda_1,\lambda_2}\le2$.

\begin{proposition}\label{prop:degree-zero-two}
If $\rdeg G_{\lambda_1,\lambda_2}=0$ or $\rdeg G_{\lambda_1,\lambda_2}=2$, then
$f_{\lambda_1,\lambda_2}$ is not a permutation polynomial of $\Fpp$.
\end{proposition}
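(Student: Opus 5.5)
The plan is to work through \cref{prop:agw-reduction} and treat the two reduced degrees separately.

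\emph{Reduced degree $0$.} By \cref{prop:reduced-degree}(i) we have $\lambda_1=0$ and $\lambda_2^{p+1}=1$. Then $N=\lambda_2^pD$, so wherever $D$ does not vanish, $G$ is the constant $\lambda_2^p$. Two cases arise.
\begin{itemize}
\item If $D$ has a zero on $\mu_{p+1}$, condition \eqref{eq:denominator-condition} already fails.
\item If $D$ has no zero on $\mu_{p+1}$, then $G$ is constant on $\mu_{p+1}$. Since $\#\mu_{p+1}=p+1>1$, it cannot permute $\mu_{p+1}$.
\end{itemize}
In either case \cref{prop:agw-reduction} shows that $f_{\lambda_1,\lambda_2}$ is not a permutation. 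This case is immediate.

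\emph{Reduced degree $2$.} Assume \eqref{eq:denominator-condition} holds and that $G$ permutes $\mu_{p+1}$; we aim for a contradiction. Write $G=N_0/D_0$ with $\gcd(N_0,D_0)=1$ and $\max(\deg N_0,\deg D_0)=2$.

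The key step is to transport the problem from $\mu_{p+1}$ to $\PP^1(\Fp)$. Choose $\delta\in\Fpp\setminus\Fp$ and the Möbius map $\nu(X)=(X-\delta^p)/(X-\delta)$. This map sends $\PP^1(\Fp)$ bijectively onto $\mu_{p+1}$: for $x\in\Fp$ we have $\nu(x)^{p+1}=1$, and $\nu(\infty)=1$. Set $H=\nu^{-1}\circ G\circ\nu$. Then $H$ is a rational function of degree $2$, and it permutes $\PP^1(\Fp)$ exactly when $G$ permutes $\mu_{p+1}$.

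We must check that $H$ has coefficients in $\Fp$. I would deduce this from the symmetry $G(T)^p=G(T^{-p})$, or equivalently from the identity $\overline{G}(1/T)=1/G(T)$, where the bar denotes applying Frobenius to the coefficients. This identity is visible directly: $T^3D^{(p)}(1/T)=N(T)$, with the superscript $(p)$ meaning Frobenius applied to the coefficients. The identity persists after cancelling common factors, so $H$ is fixed by Frobenius and hence $H\in\Fp(X)$.

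Since $p$ is odd, \cref{prop:PRF23}(2) says no degree-two rational function over $\Fp$ permutes $\PP^1(\Fp)$. This gives the contradiction.

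There is one subtlety. In reduced degree $2$, $N$ and $D$ share a linear factor $T-\alpha$. If $\alpha\in\mu_{p+1}$, then $D(\alpha)=0$, so \eqref{eq:denominator-condition} fails outright and we are done. Otherwise $G$ agrees with $N_0/D_0$ at every point of $\mu_{p+1}$, and the argument above applies.

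The main obstacle is a clean justification that the reduced function still maps $\mu_{p+1}$ to itself and is conjugate to a function defined over $\Fp$. The identity $N=T^3D^{(p)}(1/T)$ shows that the cancelled factor $T-\alpha$ in $D$ corresponds to the factor of $N$ at $\alpha^{-p}$. Hence $\alpha^{-p}=\alpha$ is possible, and the reduced pair still satisfies the same reciprocal relation up to a scalar, which gives the needed symmetry.
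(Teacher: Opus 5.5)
Your proof is correct and essentially matches the paper's: the paper conjugates by $z\mapsto(z+\beta)/(z-\beta)$ with $\beta^p=-\beta$ (your $\nu$ with $\delta=\beta$) and applies \cref{prop:PRF23}(2), and your identity $\overline{G}(1/T)=1/G(T)$ justifies $H\in\Fp(X)$, which the paper only asserts (your first form of it should read $G(T)^p=1/G(T^{-p})$). Your final paragraph can be sharpened: $\alpha^{-p}$ is also a common root of $N$ and $D$, so $\deg\gcd(N,D)=1$ forces $\alpha^{p+1}=1$, hence \eqref{eq:denominator-condition} always fails in reduced degree $2$ and the conjugation step is not needed at all.
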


\begin{proof}
If the reduced degree is zero, then $G_{\lambda_1,\lambda_2}$ is constant and cannot permute
$\mu_{p+1}$.

Assume now that the reduced degree is two. Choose $\beta\in\Fpp\setminus\Fp$ with
$\beta^p=-\beta$. The map
\[
 z\longmapsto \frac{z+\beta}{z-\beta}
\]
induces a bijection $\PP^1(\Fp)\to\mu_{p+1}$. Conjugating
$G_{\lambda_1,\lambda_2}$ by this bijection gives a rational function of degree $2$
with coefficients in $\Fp$ on $\PP^1(\Fp)$. By \cref{prop:PRF23}(2), no rational
function of degree $2$ over a field of odd characteristic $p$ permutes $\PP^1(\Fp)$. Hence
$G_{\lambda_1,\lambda_2}$ does not permute $\mu_{p+1}$, and the claim follows from
\cref{prop:agw-reduction}.
\end{proof}

The reduced degree-one case gives the second family of the Main Theorem.

\begin{proposition}\label{prop:degree-one-family}
Assume that $\rdeg G_{\lambda_1,\lambda_2}=1$ and let $N=\lambda_1^{p+1}$. Then $f_{\lambda_1,\lambda_2}$ is a
permutation polynomial of $\Fpp$ if and only if there exists $c\in\Fp^{*}$ such that
\[
 \lambda_2=c\lambda_1^3,\qquad c^2N^3-cN^2-1=0,
 \qquad
 1-4c^2N^3\in\squarep.
\]
\end{proposition}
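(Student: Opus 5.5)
The plan is to combine \cref{prop:agw-reduction} with \cref{prop:reduced-degree}(ii). In reduced degree one we have $N(T)=(T+a)Q(T)$ and $D(T)=(\lambda_2T+d)Q(T)$, where $Q=T^2+uT+v$ and $a,u,v,d$ are the explicit quantities found in the proof of \cref{prop:reduced-degree}. So off the zeros of $Q$, $G$ coincides with the M\"obius map $M(T)=(T+a)/(\lambda_2T+d)$. This map is nondegenerate, since otherwise the reduced degree would be $0$.

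The first observation is that, for every $t\in\mu_{p+1}$ with $D(t)\ne0$, the value $G(t)=t\,h(t)^{p-1}$ automatically lies in $\mu_{p+1}$. Hence, once condition \eqref{eq:denominator-condition} holds, $G=M$ on all of $\mu_{p+1}$ and maps $\mu_{p+1}$ into itself. Being an injective M\"obius map on a finite set, it then permutes $\mu_{p+1}$. Consequently, in reduced degree one, $f_{\lambda_1,\lambda_2}$ is a permutation polynomial if and only if $D$ has no zero on $\mu_{p+1}$. Equivalently, neither root of $Q$ lies on $\mu_{p+1}$, and $-d/\lambda_2=\lambda_1^p-a\notin\mu_{p+1}$.

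Second, I would rewrite \eqref{eq:E1} and \eqref{eq:E2} in the stated form. Since $\lambda_1\ne0$, \eqref{eq:E1} gives
\[
\lambda_2=\lambda_1^{1-2p}(\lambda_2^{p+1}-1)=c\lambda_1^3,\qquad c=\frac{\lambda_2^{p+1}-1}{N^2}\in\Fp,
\]
and $c\neq 0$ because $\lambda_2\ne0$. Substituting $\lambda_2^{p+1}=c^2N^3$ back into the definition of $c$ yields $c^2N^3-cN^2-1=0$. I would then check that, under $\lambda_2=c\lambda_1^3$ with $c\in\Fp$, equation \eqref{eq:E2} reduces to the same relation (for instance, by dividing by a suitable power of $\lambda_1$ and using $\lambda_1^{p}=N/\lambda_1$). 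Conversely, these conditions give back \eqref{eq:E1} and \eqref{eq:E2}. Thus being in reduced degree one is equivalent to the first two displayed conditions.

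Third, I would analyse the zeros of $D$ on $\mu_{p+1}$ in terms of $c$ and $N$. With $\lambda_2=c\lambda_1^3$ one gets
\[
u=-\frac{1}{cN\lambda_1},\qquad v=\frac{c^2N^3-1}{cN\lambda_1^{2}}=\frac{N}{\lambda_1^2}
\]
(using the relation), and $a=\lambda_1^p-u$. The substitution $T=S/\lambda_1$ turns $Q$ into an $\Fp$-polynomial:
\[
\lambda_1^2Q(S/\lambda_1)=S^2-\frac{S}{cN}+N,
\]
with discriminant $\frac{1-4c^2N^3}{c^2N^2}$. A root $t$ lies on $\mu_{p+1}$ iff $S=\lambda_1 t$ satisfies $S^{p+1}=N$. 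If the discriminant is a nonzero square, the roots $S$ lie in $\Fp$, so $S^{p+1}=S^2$. Then $S^2=N$ together with $S^2-S/(cN)+N=0$ forces $S=2cN^2$, and this can be checked to be incompatible with the relation. If the discriminant is a nonsquare, the roots are conjugate with product $N$, so $S^{p+1}=N$ and the roots lie on $\mu_{p+1}$. The discriminant cannot vanish, because then $S=1/(2cN)\in\Fp$ would satisfy $S^2=N$; indeed $1/(4c^2N^2)=N$ is exactly the vanishing condition, which gives a root on the circle. Hence $Q$ has no root on $\mu_{p+1}$ iff $1-4c^2N^3\in\squarep$.

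Finally, I would check that the linear root $\lambda_1^p-a=u=-1/(cN\lambda_1)$ has norm $1/(c^2N^3)$. This norm equals $1$ iff $c^2N^3=1$, which together with $c^2N^3-cN^2-1=0$ would give $cN^2=0$, which is impossible. So this root never lies on $\mu_{p+1}$.

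The main obstacle is the bookkeeping in the last two steps: verifying that \eqref{eq:E2} adds nothing beyond the relation $c^2N^3-cN^2-1=0$, and computing the exact form of $Q$ after rescaling. I would do both by substituting $\lambda_1^p=N/\lambda_1$ throughout, so that everything becomes a polynomial identity in $\lambda_1$, $c$ and $N$.
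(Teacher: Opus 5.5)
Your proof is correct. It lands on the same quadratic as the paper: your $S^2-S/(cN)+N$ is the paper's $cNX^2-X+cN^2$ up to the factor $cN$, and the discriminant analysis is the same. Two steps are organized differently.

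First, the paper gets $c\in\Fp$ by turning \eqref{eq:E1} and \eqref{eq:E2} into $c^{p+1}N^3-cN^2-1=0$ and $c^2N^3-cN^2-1=0$ and comparing them. You read it off \eqref{eq:E1} alone, since $c=(\lambda_2^{p+1}-1)/N^2$ is a quotient of norms. Then \eqref{eq:E2} is indeed redundant: with $c\in\Fp$ it reads $cN^2-c^3N^4+cN+1=0$, and $c^3N^4=c^2N^3+cN$ turns this into $-(c^2N^3-cN^2-1)=0$.

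Second, for the zeros of the denominator, the paper works with the cubic $1+w+cw^3$. It uses a Frobenius elimination to show that any root of norm $N$ must satisfy the quadratic, which implicitly disposes of the third root. You instead reuse the factorization $D=(\lambda_2T+d)Q$ from \cref{prop:reduced-degree} and recognize the quadratic as the rescaled cancelled factor $Q$. You then exclude the linear root $u=-1/(cN\lambda_1)$ via $u^{p+1}=1/(c^2N^3)\neq1$. Your route is more structural and reuses earlier work; the paper's avoids carrying the explicit $a,u,v,d$.

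Your observation that $G(t)=t\,h(t)^{p-1}\in\mu_{p+1}$ whenever $D(t)\neq0$ is worth keeping. It is exactly what justifies that the M\"obius map preserves $\mu_{p+1}$. The paper passes over this point when it only says that $G$ is an invertible fractional linear transformation of $\PP^1(\Fpp)$.

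One small correction concerns the nonzero-square case. There, $S=2cN^2$ together with $S^2=N$ gives $4c^2N^3=1$. This contradicts the case hypothesis $1-4c^2N^3\neq0$, not the relation $c^2N^3-cN^2-1=0$; in fact both $4c^2N^3=1$ and the relation hold for $N=9/4$, $c=-4/27$. Equivalently, as in the paper: two distinct roots in $\Fp$ with product $N$ cannot include one with $w^2=N$.
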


\begin{proof}
By \cref{prop:reduced-degree}, we have $\lambda_1\neq0$ and \eqref{eq:E1},
\eqref{eq:E2} hold. Put
\[
 c=\frac{\lambda_2}{\lambda_1^3},
 \qquad
 N=\lambda_1^{p+1}.
\]
Then $c\in \Fpp^*$ and $N\in\Fp^*$. Dividing \eqref{eq:E1} by $\lambda_1$ gives
\[
 \lambda_1^{2p-1}\lambda_2-\lambda_2^{p+1}+1=0.
\]
Since $\lambda_2=c\lambda_1^3$ and $N=\lambda_1^{p+1}$, this becomes
\begin{equation}\label{eq:degree-one-first}
 c^{p+1}N^3-cN^2-1=0.
\end{equation}
Next divide \eqref{eq:E2} by $\lambda_1^2$. We get
\[
 \lambda_1^{2p-1}\lambda_2
 -\lambda_1^{p-2}\lambda_2^{p+2}
 +\lambda_1^{p-2}\lambda_2+1=0.
\]
Substituting $\lambda_2=c\lambda_1^3$ gives
\[
 cN^2-c^{p+2}N^4+cN+1=0.
\]
Multiplying \eqref{eq:degree-one-first} by $cN$ gives
\[
 c^{p+2}N^4=c^2N^3+cN.
\]
Therefore the previous equation is equivalent to
\begin{equation}\label{eq:degree-one-second}
 c^2N^3-cN^2-1=0.
\end{equation}
Comparing \eqref{eq:degree-one-first} and \eqref{eq:degree-one-second} yields
$c^{p+1}=c^2$. Since $c\neq0$, we get $c^{p-1}=1$, hence $c\in\Fp^*$.
Thus the reduced degree-one condition is equivalent to
\[
 \lambda_2=c\lambda_1^3,
 \qquad
 c\in\Fp^*,
 \qquad
 c^2N^3-cN^2-1=0.
\]

It remains to consider condition \eqref{eq:denominator-condition}. Let $v\in\mu_{p+1}$ and put
$w=\lambda_1v$. Then $w^{p+1}=N$ and
\[
 1+\lambda_1v+\lambda_2v^3=1+w+cw^3.
\]
Suppose that $1+w+cw^3=0$ and $w^{p+1}=N$. Taking the $p$-th power gives
\[
 1+\frac{N}{w}+c^p\frac{N^3}{w^3}=0.
\]

Since \(c\in\Fp\), by raising \(1+w+cw^3=0\) to the \(p\)-th power and using
\(w^{p+1}=N\), we get
\[
1+\frac{N}{w}+c\frac{N^3}{w^3}=0.
\]
Multiplying by \(w^3\), this becomes
\[
w^3+Nw^2+cN^3=0.
\]
On the other hand, from \(1+w+cw^3=0\) we have
\[
w^3=-\frac{1+w}{c}.
\]
Substituting this into the previous equation gives
\[
-\frac{1+w}{c}+Nw^2+cN^3=0.
\]
Multiplying by \(c\) and using
\[
c^2N^3-cN^2-1=0,
\]
we obtain
\[
cNw^2-w+cN^2=0.
\]

Conversely, suppose that
\[
cNw^2-w+cN^2=0.
\]
Reducing \(1+w+cw^3\) modulo this quadratic gives
\[
1+w+cw^3=
-\frac{w}{cN^2}(c^2N^3-cN^2-1)=0.
\]

By the above argument, condition \eqref{eq:denominator-condition} is equivalent to the non-existence of a root of
\[
 cNX^2-X+cN^2=0
\]
having norm \(N\).
The discriminant of this quadratic is
\[
 \Delta=1-4c^2N^3\in\Fp.
\]
If \(\Delta=0\), the double root \(w\) satisfies \(w^2=N\). Since
\(w\in\Fp\), this gives \(w^{p+1}=N\). If \(\Delta\) is a nonsquare in \(\Fp\), the two roots are conjugate in
\(\Fpp\setminus\Fp\). Their product is \(N\), and hence each of them has norm
\(N\). If $\Delta\in\squarep$, then the two roots are
distinct elements of \(\Fp\). Their product is \(N\). If one root \(w\) had
norm \(N\), then, since \(w\in\Fp\), we would have \(w^2=N\). The other root
would then be \(N/w=w\), contradicting the fact that the two roots are
distinct. Hence no root has norm \(N\) if \(\Delta\in\squarep\).

Consequently \eqref{eq:denominator-condition} is equivalent, in the reduced
degree-one case, to $1-4c^2N^3\in\squarep$. When this holds, $G_{\lambda_1,\lambda_2}$ is an invertible fractional linear transformation of $\PP^1(\Fpp)$. Then $G_{\lambda_1,\lambda_2}$ permutes $\mu_{p+1}$ and the result follows from
\cref{prop:agw-reduction}.
\end{proof}

\section{Reduced degree three}\label{sec:degree-three}

We now assume that \(\rdeg G_{\lambda_1,\lambda_2}=3\). We first treat the
case \(\lambda_1=0\).

\begin{proposition}\label{prop:binomial-family}
Assume that \(\lambda_1=0\). Then \(f_{\lambda_1,\lambda_2}\) is a permutation
polynomial of \(\Fpp\) if and only if
\[
 p\equiv1\pmod3
 \qquad\text{and}\qquad
 \lambda_2^{p+1}\neq1.
\]
\end{proposition}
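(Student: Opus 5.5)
With $\lambda_1=0$ the polynomial is the binomial $f_{0,\lambda_2}(x)=x^{p^2-p+1}+\lambda_2x^{2p-1}$. The plan is to reduce everything to the rational function of \cref{prop:agw-reduction} and then recognise it as a Möbius conjugate of $X^3$.

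\textbf{Step 1: the degenerate cases.} If $\lambda_2^{p+1}=1$, then $\rdeg G=0$ by \cref{prop:reduced-degree}(i). By \cref{prop:degree-zero-two}, $f$ is then not a permutation polynomial. So assume $\lambda_2^{p+1}\neq1$; in particular $\rdeg G=3$ by \cref{prop:reduced-degree}.

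The case $\lambda_2=0$ is immediate. Here $G(T)=T^3$, and the denominator condition \eqref{eq:denominator-condition} holds trivially. The map $T\mapsto T^3$ permutes $\mu_{p+1}$ if and only if $\gcd(3,p+1)=1$, that is, if and only if $p\equiv1\pmod 3$. This agrees with the statement.

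\textbf{Step 2: the denominator condition for $\lambda_2\neq0$.} We need $1+\lambda_2v^3\neq0$ for every $v\in\mu_{p+1}$. Suppose instead that $v^3=-1/\lambda_2$ with $v\in\mu_{p+1}$. Taking norms gives $\lambda_2^{p+1}=1$, which we have excluded. So \eqref{eq:denominator-condition} holds automatically.

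\textbf{Step 3: identify $G$ as a conjugate of $X^3$.} Now
\[
G(T)=\frac{T^3+\lambda_2^p}{1+\lambda_2T^3}=\psi(T^3),\qquad \psi(S)=\frac{S+\lambda_2^p}{\lambda_2S+1}.
\]
The determinant of $\psi$ is $1-\lambda_2^{p+1}\neq0$, so $\psi$ is a Möbius map. Moreover, $\psi$ maps $\mu_{p+1}$ to itself. To check this, take $s\in\mu_{p+1}$. Then
\[
\psi(s)^p=\frac{s^{-1}+\lambda_2}{\lambda_2^ps^{-1}+1}=\frac{1+\lambda_2 s}{\lambda_2^p+s}=\psi(s)^{-1}.
\]
Here the denominator $\lambda_2 s+1$ is nonzero because $\lambda_2^{p+1}\neq1$. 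Since $\psi$ is injective and $\mu_{p+1}$ is finite, $\psi$ restricts to a bijection of $\mu_{p+1}$.

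Hence $G$ permutes $\mu_{p+1}$ if and only if $T\mapsto T^3$ does. As in Step 1, this happens if and only if $\gcd(3,p+1)=1$, that is, $p\equiv1\pmod3$. Combining this with \cref{prop:agw-reduction} gives the claim.

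\textbf{Expected obstacle.} The only mildly delicate point is Step 3: checking that $\psi$ genuinely preserves $\mu_{p+1}$, including that it never sends a point of $\mu_{p+1}$ to $\infty$. Both follow from $\lambda_2^{p+1}\neq1$. No appeal to \cref{prop:PRF23} or to the quartic curve is needed here.
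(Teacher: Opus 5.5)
Your proposal is correct and follows essentially the same route as the paper: you write $G_{0,\lambda_2}=\psi(T^3)$ with $\psi$ a M\"obius map of determinant $1-\lambda_2^{p+1}$, which reduces the question to the cube map on $\mu_{p+1}$ (a permutation iff $p\equiv1\pmod 3$), and you show the denominator condition amounts to $\lambda_2^{p+1}\neq1$. Your explicit check that $\psi$ maps $\mu_{p+1}$ to itself, and your norm argument for the denominator (which, unlike the paper's, does not use surjectivity of the cube map), spell out points the paper leaves implicit; the argument is otherwise the same.
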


\begin{proof}
If \(\lambda_1=0\), then
\[
 G_{0,\lambda_2}(T)
 =
 \frac{T^3+\lambda_2^p}{1+\lambda_2T^3}.
\]
If \(\lambda_2^{p+1}=1\), then by Proposition~\ref{prop:reduced-degree}
the reduced degree of \(G_{0,\lambda_2}\) is zero. Hence \(G_{0,\lambda_2}\)
is constant and cannot permute \(\mu_{p+1}\). Thus a necessary condition is
\[
 \lambda_2^{p+1}\neq1.
\]

Assume now that \(\lambda_2^{p+1}\neq1\). Then the map
\[
 z\longmapsto \frac{z+\lambda_2^p}{1+\lambda_2 z}
\]
is a projective linear transformation, since its determinant is
\[
1-\lambda_2^{p+1}\neq0.
\]
Therefore
\[
G_{0,\lambda_2}(T)=
\frac{T^3+\lambda_2^p}{1+\lambda_2T^3}
\]
permutes \(\mu_{p+1}\) if and only if the cube map \(T\mapsto T^3\) permutes
\(\mu_{p+1}\), and the denominator \(1+\lambda_2T^3\) has no zero on
\(\mu_{p+1}\).

The cube map permutes \(\mu_{p+1}\) if and only if
\[
 \gcd(3,p+1)=1,
\]
that is, if and only if
\[
 p\equiv1\pmod3.
\]
Under this hypothesis, \(\{T^3:T\in\mu_{p+1}\}=\mu_{p+1}\). If
\(\lambda_2=0\), then the denominator is identically \(1\), and the condition
\(\lambda_2^{p+1}\neq1\) is automatically satisfied. If \(\lambda_2\neq0\),
then the condition
\[
1+\lambda_2T^3\neq0
\qquad\text{for every }T\in\mu_{p+1}
\]
is equivalent to
\[
-\lambda_2^{-1}\notin\mu_{p+1}.
\]
Since \(p+1\) is even, this is equivalent to
\[
(-\lambda_2^{-1})^{p+1}\neq1,
\]
that is,
\[
\lambda_2^{p+1}\neq1.
\]
The claim follows from Proposition~\ref{prop:agw-reduction}.
\end{proof}

From now on in this section we assume \(\lambda_1\neq0\). Our aim is to show
that the only admissible reduced degree-three case gives the third family in
the Main Theorem.

Define
\begin{equation}\label{eq:Hdef}
 H_{\lambda_1,\lambda_2}(X,Y)=
 \frac{N(X)D(Y)-N(Y)D(X)}{X-Y}.
\end{equation}
For \(x\neq y\), the equality
\[
H_{\lambda_1,\lambda_2}(x,y)=0
\]
is exactly the condition
\[
G_{\lambda_1,\lambda_2}(x)=G_{\lambda_1,\lambda_2}(y),
\]
provided that both denominators are nonzero. A direct computation gives
\begin{align}\label{eq:Hexplicit}
H_{\lambda_1,\lambda_2}(X,Y)={}&
-\lambda_1^p\lambda_2X^2Y^2
+\lambda_1(X^2Y+XY^2)\notag\\
&+(1-\lambda_2^{p+1})(X^2+Y^2)\notag\\
&+(\lambda_1^{p+1}-\lambda_2^{p+1}+1)XY
+\lambda_1^p(X+Y)
-\lambda_1\lambda_2^p.
\end{align}
Clearly \(H_{\lambda_1,\lambda_2}\) is symmetric in \(X,Y\). We write
\begin{equation}\label{eq:ABCDEF}
H_{\lambda_1,\lambda_2}(X,Y)
=
AX^2Y^2+B(X^2Y+XY^2)+C(X^2+Y^2)+DXY+E(X+Y)+F,
\end{equation}
where
\begin{equation}\label{eq:coefficients-direct}
\begin{gathered}
 A=-\lambda_1^p\lambda_2,
 \qquad
 B=\lambda_1,
 \qquad
 C=1-\lambda_2^{p+1},\\
 D=\lambda_1^{p+1}-\lambda_2^{p+1}+1,
 \qquad
 E=\lambda_1^p,
 \qquad
 F=-\lambda_1\lambda_2^p.
\end{gathered}
\end{equation}
Let \(\mathcal C_{\lambda_1,\lambda_2}\) be the affine plane curve defined by
\[
H_{\lambda_1,\lambda_2}(X,Y)=0.
\]
If \(G_{\lambda_1,\lambda_2}\) permutes \(\mu_{p+1}\), then
\(\mathcal C_{\lambda_1,\lambda_2}\) has no point
\[
(x,y)\in\mu_{p+1}^2,\qquad x\neq y,
\]
at which both denominators are nonzero. Indeed, such a point would give
\[
G_{\lambda_1,\lambda_2}(x)=G_{\lambda_1,\lambda_2}(y),
\]
contradicting the injectivity of \(G_{\lambda_1,\lambda_2}\) on \(\mu_{p+1}\).

We now show that, in the permutation case, the curve \(\mathcal C_{\lambda_1,\lambda_2}\) cannot be
absolutely irreducible.

\begin{lemma}\label{lem:irreducible-impossible}
Let \(p>3\) and \(
 \rdeg G_{\lambda_1,\lambda_2}=3
\). If \(f_{\lambda_1,\lambda_2}\) is a permutation
polynomial of \(\Fpp\), then \(\mathcal C_{\lambda_1,\lambda_2}\) is not
absolutely irreducible.
\end{lemma}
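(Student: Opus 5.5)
We argue by contradiction. Suppose $f_{\lambda_1,\lambda_2}$ permutes $\Fpp$, that $\rdeg G_{\lambda_1,\lambda_2}=3$, and that $\mathcal C_{\lambda_1,\lambda_2}$ is absolutely irreducible. We transport the curve to one defined over $\Fp$ whose $\Fp$-rational points correspond to points of $\mathcal C_{\lambda_1,\lambda_2}$ on $\mu_{p+1}^2$. We then use \cref{prop:AP} to produce a point with $x\neq y$ and nonvanishing denominators, which contradicts injectivity of $G_{\lambda_1,\lambda_2}$ on $\mu_{p+1}$.

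\emph{Change of coordinates.} As in the proof of \cref{prop:degree-zero-two}, fix $\beta\in\Fpp\setminus\Fp$ with $\beta^p=-\beta$ and use the bijection $\PP^1(\Fp)\to\mu_{p+1}$, $z\mapsto (z+\beta)/(z-\beta)$. Substitute $X=(U+\beta)/(U-\beta)$ and $Y=(V+\beta)/(V-\beta)$ into $H_{\lambda_1,\lambda_2}$ and clear the denominator $(U-\beta)^2(V-\beta)^2$. This gives a polynomial $K(U,V)$ of bidegree $(2,2)$, hence total degree at most $4$.

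The key point is that $G_{\lambda_1,\lambda_2}$ maps $\mu_{p+1}$ to $\mu_{p+1}\cup\{\infty\}$, because $G(t)^{p}=1/G(t)$ on $\mu_{p+1}$; this is visible from $G=t\,h(t)^{p}/h(t)$. Hence the conjugated rational function $\tilde G$ has coefficients in $\Fp$, up to a scalar. The numerator of $(\tilde G(U)-\tilde G(V))/(U-V)$ is then, up to a nonzero constant, $K$, so $K$ is proportional to a polynomial over $\Fp$. Alternatively, one checks directly that conjugating coefficients of $K$ sends it to a scalar multiple of itself, using $A^p$ versus $F$, $B^p$ versus $E$, and $C,D\in\Fp$. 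Since absolute irreducibility is preserved by birational change of variables, the projective closure $\mathcal K$ of $K=0$ is an absolutely irreducible quartic over $\Fp$.

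\emph{Counting.} By \cref{prop:AP}, $\#\mathcal K(\Fp)\ge p+1-6\sqrt p$. We must discard bad points:
\begin{itemize}
\item points at infinity: at most $4$;
\item points on the diagonal $U=V$: at most $4$, since $K(U,U)$ has degree at most $4$, and $K(U,U)\not\equiv0$ because $\mathcal K$ is irreducible and is not the line $U=V$;
\item points where $D(x)=0$ or $D(y)=0$: at most $3$ values of each coordinate, each giving at most $2$ points on the curve, so at most $12$;
\item points with $U$ or $V$ equal to $\infty$, corresponding to $x=1$ or $y=1$: handle these similarly, at most about $8$.
\end{itemize}
Thus a valid point exists as soon as $p+1-6\sqrt p>28$. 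This holds for $p$ beyond roughly $p\ge 67$. Such a point yields $x\neq y$ in $\mu_{p+1}$ with $G(x)=G(y)$, which is a contradiction.

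\emph{Main obstacle.} The main difficulty is the small primes $5\le p\lesssim 61$, where the Aubry--Perret bound is too weak. One option is to sharpen the counting: condition \eqref{eq:denominator-condition} already guarantees $D$ has no zeros on $\mu_{p+1}$, which removes the third item, and diagonal points are controlled more precisely via $K(U,U)$. Another is to use the degree $3$ classification in \cref{prop:PRF23}(3): a permuting $\tilde G$ of degree $3$ must be equivalent to $X^3$ or $\nu^{-1}\circ X^3\circ\nu$. For these, $(\tilde G(U)-\tilde G(V))/(U-V)$ visibly factors, as products of conjugate conics of the form $U^2+\omega UV+\omega^2V^2$ after a linear change. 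So absolute irreducibility is impossible for \emph{every} $p>3$. I would actually lead with this second argument, since it avoids both the bound and the small-prime issue. Its only delicate part is verifying that equivalence by degree-one maps preserves the (ir)reducibility of the associated curve, which holds because such maps act by birational transformations of $\PP^1\times\PP^1$.
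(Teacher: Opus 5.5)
Your proposal is correct, and the argument you say you would lead with differs genuinely from the paper's.

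\textbf{The paper's route.} The paper's proof is essentially your first route. It pulls $\mathcal C_{\lambda_1,\lambda_2}$ back along $\psi$ to an absolutely irreducible $\Fp$-curve of degree at most $4$ and applies \cref{prop:AP}. It then discards only the at most $4$ points at infinity and the at most $4$ diagonal points. No denominator or $x=1$ discards are needed: condition (a) of \cref{prop:agw-reduction} already holds, and affine $\Fp$-points always give $x\neq1$. This works for $p\ge 53$, and the primes $5\le p\le 47$ are settled by a Magma computation.

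\textbf{Your route.} Your classification argument applies \cref{prop:PRF23}(3), which the paper states but does not use in its proofs, to $\tilde G=\psi^{-1}\circ G_{\lambda_1,\lambda_2}\circ\psi\in\Fp(X)$. This shows that $G_{\lambda_1,\lambda_2}$ is equivalent over $\Fpp$ to $X^3$. Post-composition by a degree-one map only rescales $N(X)D(Y)-N(Y)D(X)$ by its determinant. Pre-composition acts through an automorphism $(\chi,\chi)$ of $\PP^1\times\PP^1$. Hence $\mathcal C_{\lambda_1,\lambda_2}$ is the image of $U^2+UV+V^2=(U-\omega V)(U-\omega^2V)=0$. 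That is the relevant factorization, not ``$U^2+\omega UV+\omega^2V^2$''.

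\textbf{Comparison.} Your argument is uniform in $p>3$ and needs no computer check. It also yields more than the lemma. The two components are the graphs of $\tau$ and $\tau^{-1}$ for a M\"obius map $\tau$ of order $3$. They are therefore irreducible $(1,1)$-curves exchanged by $(X,Y)\mapsto(Y,X)$. In the necessity direction this would make \cref{lem:four-lines} and \cref{lem:fixed-conics} redundant. What the paper's route buys is independence from the Ferraguti--Micheli classification, at the price of a finite computation.

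\textbf{Slips in your counting route.} Neither affects your lead argument.
\begin{enumerate}
\item With $28$ discarded points, the Aubry--Perret bound only works from $p\ge 83$ (already $80-6\sqrt{79}<28$), not from $p\ge 67$.
\item Sharpening the count just brings you back to the paper's threshold $p\ge 53$. So that route alone cannot handle $5\le p\le 47$ without a computation or your classification argument.
\end{enumerate}
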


\begin{proof}
Assume, by contradiction, that \(\mathcal C_{\lambda_1,\lambda_2}\) is
absolutely irreducible. Choose \(\beta\in\Fpp\setminus\Fp\) such that \(\beta^p=-\beta\), and define
\[
 \psi(U)=\frac{U+\beta}{U-\beta}
 \qquad (U\in\PP^1(\Fp)).
\]
For \(U\in\Fp\) one has
\[
 \psi(U)^p=\frac{U-\beta}{U+\beta}=\psi(U)^{-1},
\]
so \(\psi(U)\in\mu_{p+1}\). Also \(\psi(\infty)=1\). Thus \(\psi\) induces a
bijection
\[
 \PP^1(\Fp)\longrightarrow \mu_{p+1},
\]
with inverse
\[
 \psi^{-1}(t)=\beta\,\frac{t+1}{t-1}.
\]

After multiplying
\[
H_{\lambda_1,\lambda_2}
\left(
\psi(U),\psi(V)
\right)
=
H_{\lambda_1,\lambda_2}
\left(
\frac{U+\beta}{U-\beta},
\frac{V+\beta}{V-\beta}
\right)
\]
by \((U-\beta)^2(V-\beta)^2\), we obtain an affine plane curve
\(\mathcal D\). After multiplication by a nonzero scalar, \(\mathcal D\) is
defined over \(\Fp\). Moreover, the change of variables is birational, so
\(\mathcal D\) is absolutely irreducible.

By construction, every affine point \((u,v)\in\mathcal D(\Fp)\) gives a point
\[
 (x,y)=\bigl(\psi(u),\psi(v)\bigr)\in\mu_{p+1}\times\mu_{p+1}
\]
satisfying
\[
 H_{\lambda_1,\lambda_2}(x,y)=0.
\]
Moreover, \(u\neq v\) if and only if \(x\neq y\). Thus an \(\Fp\)-rational
affine point of \(\mathcal D\) off the diagonal \(U=V\) gives a point of
\(\mathcal C_{\lambda_1,\lambda_2}\) with coordinates in \(\mu_{p+1}\), off the
diagonal \(X=Y\).

Let \(\overline{\mathcal D}\) be the projective closure of \(\mathcal D\) in
\(\PP^2\). Since the polynomial \(H_{\lambda_1,\lambda_2}(X,Y)\) has degree at
most \(4\), also
\[
 \deg \overline{\mathcal D}\le 4.
\]
By the Aubry--Perret bound,
\[
 \#\overline{\mathcal D}(\Fp)
 \ge
 p+1-6\sqrt p.
\]

Observe that both the line at infinity and the diagonal meet \(\overline{\mathcal D}\) in
at most \(4\) points, and neither of these two lines is a component of \(\overline{\mathcal D}\).

So, if \(p\ge53\), one has
\[
 p+1-6\sqrt p>8,
\]
and hence \(\overline{\mathcal D}\) has an \(\Fp\)-rational affine point
\((u,v)\) off the diagonal. So, the corresponding elements
\[
 x=\frac{u+\beta}{u-\beta},
 \qquad
 y=\frac{v+\beta}{v-\beta}
\]
belong to \(\mu_{p+1}\), are distinct, and satisfy
\[
 H_{\lambda_1,\lambda_2}(x,y)=0.
\]
This is a contradiction as \(f_{\lambda_1,\lambda_2}\) is a permutation polynomial.

It remains to consider the finite set of primes not covered by the above
estimate, namely
\[
 p=5,7,11,13,17,19,23,29,31,37,41,43,47.
\]
For these primes the claim can be easily directly checked by Magma.
Thus the absolutely irreducible case is impossible for every prime \(p>3\).
\end{proof}

We next exclude the case \(\lambda_2=0\) in reduced degree three.

\begin{proposition}\label{prop:lambda2-zero-rdeg3}
Assume that \(\lambda_1\neq0\) and \(\lambda_2=0\). If
\(\rdeg G_{\lambda_1,\lambda_2}=3\), then \(f_{\lambda_1,\lambda_2}\) is not
a permutation polynomial of \(\Fpp\).
\end{proposition}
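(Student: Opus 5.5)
The plan is to reduce the statement to Lemma~\ref{lem:irreducible-impossible}: I will show that when \(\lambda_2=0\) and \(\rdeg G_{\lambda_1,\lambda_2}=3\), the curve \(\mathcal C_{\lambda_1,\lambda_2}\) is absolutely irreducible. First I make the hypotheses explicit. With \(\lambda_2=0\), the coefficients \eqref{eq:coefficients-direct} become \(A=F=0\), \(B=\lambda_1\), \(C=1\), \(D=N+1\), \(E=\lambda_1^p\), where \(N=\lambda_1^{p+1}\). Hence
\[
 H_{\lambda_1,0}(X,Y)=\lambda_1XY(X+Y)+X^2+Y^2+(N+1)XY+\lambda_1^p(X+Y),
\]
which is a cubic. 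Specializing \eqref{eq:resultant-ND} at \(\lambda_2=0\) gives \(\Res_T(N,D)=1-N\). So the assumption \(\rdeg G=3\) means exactly \(N\neq1\); this is the only nondegeneracy condition available, and I expect it to be the one that rules out a factorization.

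Since \(H_{\lambda_1,0}\) has degree \(3\), any nontrivial factorization over \(\overline{\F}_p\) contains a linear factor \(L=aX+bY+c\). The top-degree form \(\lambda_1XY(X+Y)\) factors into three distinct lines, so \(aX+bY\) must be proportional to \(X\), to \(Y\), or to \(X+Y\). By the symmetry \(X\leftrightarrow Y\), the cases \(L=X-c\) and \(L=Y-c\) are equivalent, so there are only two cases to treat.

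\emph{Case \(L=X-c\).} I require \(H(c,Y)\equiv0\) as a polynomial in \(Y\). The \(Y^2\)-coefficient \(\lambda_1c+1\) forces \(c=-1/\lambda_1\). The constant term \(c^2+\lambda_1^pc=\lambda_1^{-2}-\lambda_1^{p-1}\) must then vanish, which gives \(N=\lambda_1^{p+1}=1\), a contradiction.

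\emph{Case \(L=X+Y-c\).} I substitute \(Y=c-X\) and require the resulting polynomial in \(X\) to vanish identically. Write \(s=X+Y=c\) and \(P=XY=cX-X^2\). Then
\[
 H=\lambda_1cP+c^2-2P+(N+1)P+\lambda_1^pc=(\lambda_1c+N-1)P+c^2+\lambda_1^pc .
\]
Since \(P\) is a nonconstant polynomial in \(X\), this forces \(\lambda_1c=1-N\) and \(c(c+\lambda_1^p)=0\). If \(c=0\), then \(N=1\), which is excluded. If \(c=-\lambda_1^p\), then \(-N=1-N\), which is impossible. Hence no such line exists.

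It follows that \(\mathcal C_{\lambda_1,0}\) is absolutely irreducible. Lemma~\ref{lem:irreducible-impossible} then shows that \(f_{\lambda_1,0}\) is not a permutation polynomial. I also want to check that the lemma's proof, which uses the degree bound \(\le4\) and the Magma check for small \(p\), applies verbatim to a cubic; I believe it does. The main obstacle is simply to do the case analysis of linear factors cleanly: the symmetric substitution \(s=X+Y\), \(P=XY\) is the step that makes the second case short. As a fallback, \(\deg\overline{\mathcal D}\le3\) gives the stronger Aubry--Perret bound \(p+1-2\sqrt p\), and one could then settle small \(p\) directly.
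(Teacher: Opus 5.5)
Your proposal is correct and follows the paper's proof essentially step by step. You write out the cubic $H_{\lambda_1,0}$ and note that $\rdeg G=3$ is equivalent to $\lambda_1^{p+1}\neq1$. You then exclude linear factors in the three directions $X$, $Y$, $X+Y$ allowed by the top form $\lambda_1XY(X+Y)$, using the same coefficient conditions as the paper, and conclude via Lemma~\ref{lem:irreducible-impossible}. Your substitution $s=X+Y$, $P=XY$ in the last case is only a cosmetic shortcut for the paper's direct substitution $Y=-X-\eta$.
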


\begin{proof}
For \(\lambda_2=0\), we have
\[
 G_{\lambda_1,0}(T)
 =
 \frac{T^3+\lambda_1^pT^2}{1+\lambda_1T}
 =
 \frac{T^2(T+\lambda_1^p)}{1+\lambda_1T}.
\]
It is readily seen that \(
 \rdeg G_{\lambda_1,0}=3\) if and only if \(\lambda_1^{p+1}\neq1.\) Put
\(
 \nu=\lambda_1^{p+1}\neq 1.
\)
Then \(\mathcal C_{\lambda_1,\lambda_2}\) is defined by the affine equation
\[
\begin{aligned}
0={}&
\lambda_1^2XY(X+Y)
+\lambda_1(X^2+Y^2)  \\
&+\lambda_1(\nu+1)XY
+\nu(X+Y)=\lambda_1H_{\lambda_1,\lambda_2}
\end{aligned}
\]
and it is a cubic curve.
We prove that this cubic has no linear factor over \(\overline{\mathbb{F}}_p\). Since a reducible plane cubic always has a line component, this proves that \(\mathcal C_{\lambda_1,\lambda_2}\) is absolutely irreducible.

The homogeneous part of degree \(3\) is
\[
 \lambda_1^2XY(X+Y).
\]
Therefore, if the cubic had a linear factor, it would have one of the following forms:
\[
 X+\eta,\qquad Y+\eta,\qquad X+Y+\eta
\]
for some \(\eta\in\overline{\Fp}\).

First suppose that \(X+\eta\) divides the cubic. Substituting \(X=-\eta\), the
resulting polynomial in \(Y\) must be identically zero. A direct computation
gives
\[
-\lambda_1(\lambda_1\eta-1)Y^2
+
(\lambda_1\eta-\nu)(\lambda_1\eta-1)Y
+
\eta(\lambda_1\eta-\nu).
\]
The coefficient of \(Y^2\) gives
\[
 \lambda_1\eta=1.
\]
Substituting this into the constant term gives
\[
 \eta(1-\nu)=0.
\]
Since \(\eta=1/\lambda_1\neq0\), this forces \(\nu=1\), a contradiction. Hence there is no factor of the form \(X+\eta\).
By symmetry, there is no factor of the form \(Y+\eta\).

It remains to consider a factor of the form \(X+Y+\eta\). Substituting
\(Y=-X-\eta\), we obtain
\[
\lambda_1(\lambda_1\eta-\nu+1)X^2
+
\lambda_1\eta(\lambda_1\eta-\nu+1)X
+
\eta(\lambda_1\eta-\nu).
\]
The coefficient of \(X^2\) gives
\[
 \lambda_1\eta=\nu-1.
\]
If \(\eta=0\), this gives \(\nu=1\), again impossible. If \(\eta\neq0\), then
the constant term gives
\[
 \lambda_1\eta=\nu.
\]
Together with \(\lambda_1\eta=\nu-1\), this is impossible. Therefore there is
no factor of the form \(X+Y+\eta\).

Therefore \(\mathcal C_{\lambda_1,\lambda_2}\) is absolutely irreducible and by Lemma~\ref{lem:irreducible-impossible} \(f_{\lambda_1,\lambda_2}\) is not a permutation polynomial of $\mathbb{F}_{p^2}$.
\end{proof}

In view of Proposition~\ref{prop:lambda2-zero-rdeg3}, from now on we may assume
\[
 \lambda_1\lambda_2\neq0.
\]
Then
\[
 A=-\lambda_1^p\lambda_2\neq0,
 \qquad
 B=\lambda_1\neq0,
\]
and hence the curve \(\mathcal C_{\lambda_1,\lambda_2}\) has
degree \(4\).

\subsection{Factorization of the quartic}

The polynomial $H_{\lambda_1,\lambda_2}(X,Y)$ is symmetric under the involution $(X,Y)\mapsto(Y,X)$ and has degree at most $2$ in each of the variables $X$ and $Y$. If it is reducible, then its components are arranged in one of the following ways: four lines, two conics fixed by the involution, or two conics exchanged by the involution. We analyse these cases by comparing the coefficients in \eqref{eq:ABCDEF}.

\begin{lemma}\label{lem:four-lines}
If
\(\rdeg G_{\lambda_1,\lambda_2}=3\) then $H_{\lambda_1,\lambda_2}$ cannot split into four linear factors.
\end{lemma}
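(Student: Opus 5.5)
The plan is to argue by contradiction. The degree-4 part of $H_{\lambda_1,\lambda_2}$ is $AX^2Y^2$ with $A\neq0$, since we are in the case $\lambda_1\lambda_2\neq0$. So if $H$ splits into four linear factors over $\overline{\Fp}$, each factor must be of the form $X-a$ or $Y-b$. As $H$ has degree exactly $2$ in each variable, we get
\[
H_{\lambda_1,\lambda_2}(X,Y)=A(X-a_1)(X-a_2)(Y-b_1)(Y-b_2).
\]
In other words, $H$ would be a product $A\,P(X)Q(Y)$ of two univariate quadratics.

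Symmetry of $H$ then forces $P$ and $Q$ to be proportional, and after normalising we may write $H=A\,P(X)P(Y)$ with $P(T)=T^2+sT+r$ monic. Expanding and comparing with \eqref{eq:ABCDEF} gives
\[
B=As,\qquad C=Ar,\qquad D=As^2,\qquad E=Asr,\qquad F=Ar^2.
\]
The key relations are therefore
\[
AD=B^2,\qquad AC=\dots,\qquad BC=AE,\qquad C^2=AF.
\]

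Next, substitute the explicit coefficients from \eqref{eq:coefficients-direct}. The relation $BC=AE$ becomes
\[
\lambda_1(1-\lambda_2^{p+1})=-\lambda_1^{2p}\lambda_2 .
\]
The relation $AD=B^2$ becomes
\[
-\lambda_1^p\lambda_2(\lambda_1^{p+1}-\lambda_2^{p+1}+1)=\lambda_1^2 .
\]
The relation $C^2=AF$ becomes
\[
(1-\lambda_2^{p+1})^2=\lambda_1^{p+1}\lambda_2^{p+1}.
\]
The first two are, up to rearrangement, exactly \eqref{eq:E1} and \eqref{eq:E2}. Indeed, \eqref{eq:E1} reads $\lambda_1^{2p}\lambda_2-\lambda_1\lambda_2^{p+1}+\lambda_1=0$, which matches the first relation. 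Multiplying out \eqref{eq:E2} gives $\lambda_1^{2p+1}\lambda_2-\lambda_1^p\lambda_2^{p+2}+\lambda_1^p\lambda_2+\lambda_1^2=0$, which matches the second relation. By \cref{prop:reduced-degree}(ii), these two relations together with $\lambda_1\neq0$ mean $\rdeg G=1$, contradicting $\rdeg G=3$.

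A more conceptual route, which I would use as a cross-check, also works. If $H(X,Y)=A\,P(X)P(Y)$, then $H$ vanishes on the whole line $X=a_1$. Hence $N(a_1)D(Y)-N(Y)D(a_1)$ vanishes identically in $Y$, so either $N$ and $D$ are proportional or $N(a_1)=D(a_1)=0$. The first option is excluded because $\rdeg G\neq0$. So $a_1$ is a common root, forcing $\Res_T(N,D)=0$ and contradicting \cref{prop:reduced-degree}(iv).

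This second argument avoids the computations entirely, and I would present it as the main proof. The only delicate step is the first one: justifying that every linear factor must be parallel to an axis. That follows from reading off the top homogeneous part $AX^2Y^2$, since a product of linear forms equals $X^2Y^2$ up to a scalar only if each form is a multiple of $X$ or of $Y$.
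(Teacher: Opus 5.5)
Your proof is correct. Your first argument is essentially the paper's proof: you force $H=A\,P(X)P(Y)$, read off $AE=BC$ and $AD=B^2$, and recognise these as \eqref{eq:E1} and \eqref{eq:E2}, so $N$ and $D$ share a quadratic factor and $\rdeg G=1$.

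The argument you propose as the main one is genuinely different. You substitute $X=a_1$ into the polynomial identity $(X-Y)H(X,Y)=N(X)D(Y)-N(Y)D(X)$. This gives $N(a_1)D(Y)=N(Y)D(a_1)$ identically in $Y$, so either $N,D$ are proportional or $a_1$ is a common root, and both force $\Res_T(N,D)=0$.

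What this buys you:
\begin{itemize}
\item It is independent of the explicit coefficients \eqref{eq:coefficients-direct}, and of the ``direct computation'' of $H$ in \eqref{eq:Hexplicit}.
\item It proves more: whenever $\gcd(N,D)=1$, $H$ has no component $X=a$ or $Y=b$ at all. Neither symmetry nor the full splitting is needed, only that the top form $AX^2Y^2$ makes every linear factor axis-parallel.
\end{itemize}

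What the paper's computation buys: it stays in the same coefficient-comparison framework used for the conic cases in Lemma~\ref{lem:fixed-conics} and Proposition~\ref{prop:switched-conics}. It also identifies the four-line degeneration directly with the reduced-degree-one equations.

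Two cosmetic points. Delete the dangling ``$AC=\dots$'' and the unused relation $C^2=AF$. Also, your hypothesis $A\neq0$ needs no appeal to the standing assumption $\lambda_1\lambda_2\neq0$: if $A=0$, then $H$ has total degree less than $4$ and cannot be a product of four linear factors anyway.
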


\begin{proof}
Since \(H\) has degree at most \(2\) in each variable, and because of the symmetry \((X,Y)\mapsto(Y,X)\), if $H_{\lambda_1,\lambda_2}$ splits into four linear factors, then 
\[
 H_{\lambda_1,\lambda_2}=A(X+u)(X+v)(Y+u)(Y+v)
\]
for some $u,v$. Put $s_1=u+v$ and $s_2=uv$. Expanding gives
\[
 B=As_1,
 \quad
 C=As_2,
 \quad
 D=As_1^2,
 \quad
 E=As_1s_2,
 \quad
 F=As_2^2.
\]
Thus
\[
 AD=B^2,
 \qquad
 AE=BC,
 \qquad
 AF=C^2.
\]
For the coefficients in \eqref{eq:coefficients-direct}, $AE=BC$ reads
\[
 -\lambda_1^{2p}\lambda_2=\lambda_1(1-\lambda_2^{p+1}),
\]
which is exactly \eqref{eq:E1}. Similarly, the identity \(AD=B^2\) gives
\[ (-\lambda_1^p\lambda_2)(\lambda_1^{p+1}-\lambda_2^{p+1}+1)
 =\lambda_1^2,
\]
that is
\[
\lambda_1^{2p+1}\lambda_2 -\lambda_1^p\lambda_2^{p+2}+\lambda_1^p\lambda_2+\lambda_1^2=0,
\]
which is precisely \eqref{eq:E2}. Hence $N$ and $D$ have a common quadratic factor, and the reduced degree is
$1$, a contradiction.
\end{proof}

\begin{lemma}\label{lem:fixed-conics}
Assume that \(\rdeg G_{\lambda_1,\lambda_2}=3\). Then
\(H_{\lambda_1,\lambda_2}\) cannot factor as a product of two conics both
fixed by the involution \((X,Y)\mapsto(Y,X)\).
\end{lemma}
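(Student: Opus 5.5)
The plan is to turn the factorization into a small system of equations in the coefficients \eqref{eq:coefficients-direct}, and then show that this system forces one of the degenerate cases of \cref{prop:reduced-degree}.

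\textbf{Shape of the factors.} Suppose $H_{\lambda_1,\lambda_2}=Q_1Q_2$ with each $Q_i$ a conic fixed by the involution. A polynomial with $Q_i(X,Y)=\kappa Q_i(Y,X)$ has the same degree in $X$ and in $Y$. The degrees in $X$ add up to $2$, so either both $Q_i$ have bidegree $(1,1)$, or one of them is constant. The second option is not a factorization. Since $Q_i$ has total degree at most $2$, I will also check that a possible antisymmetric factor would have to be a multiple of $X-Y$, and exclude it. Otherwise $(X-Y)\mid H$, which gives $H(X,X)=0$, that is, $N'D-ND'\equiv0$; this would make $G$ constant or an inseparable function, incompatible with $\rdeg G=3$ and $p>3$. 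Hence, after scaling,
\[
 Q_i=a_iXY+b_i(X+Y)+c_i,\qquad i=1,2.
\]

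\textbf{Coefficient system.} Expanding $Q_1Q_2$ and comparing with \eqref{eq:ABCDEF} gives
\[
 A=a_1a_2,\quad B=a_1b_2+a_2b_1,\quad C=b_1b_2,\quad D=a_1c_2+a_2c_1+2b_1b_2,\quad E=b_1c_2+b_2c_1,\quad F=c_1c_2 .
\]
Since $A\neq0$ we may normalize $a_1=1$ and $a_2=A$. This leaves six equations in the four unknowns $b_1,b_2,c_1,c_2$.

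A convenient reformulation is the following. The quadratics $q_i(z)=a_iz^2+b_iz+c_i$ satisfy
\[
 q_1q_2=Az^4+Bz^3+(D-C)z^2+Ez+F,
\]
with the extra constraint that the product of the middle coefficients $b_1b_2$ equals $C$. So the quartic $Az^4+Bz^3+(D-C)z^2+Ez+F$ must split into two quadratics in a constrained way.

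\textbf{Elimination.} From the equations for $A,B,C$, the numbers $Ab_1$ and $b_2$ are the two roots of
\[
 w^2-Bw+AC=0 .
\]
The equations for $E$ and $F$ then express $c_1,c_2$ as the roots of a linear system. Substituting into the equation for $D$ leaves two polynomial conditions in $A,\dots,F$.

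I will then substitute \eqref{eq:coefficients-direct} and use the Frobenius structure of the coefficients: $F=A^p$, $E=B^p$, and $C,D\in\Fp$. The aim is to show that these two conditions imply either $\Res_T(N,D)=0$ or the pair \eqref{eq:E1}, \eqref{eq:E2}. Either way this contradicts $\rdeg G=3$.

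A cleaner route I would try first is to read the factorization as a statement about $G$ itself. A symmetric bidegree-$(1,1)$ component $Q_1$ is a correspondence on $\PP^1$. The condition that $Q_1$ is fixed by the involution and contained in $\{G(X)=G(Y)\}$ should force $G$ to be invariant under a M\"obius involution $\sigma$ with $Q_1=\{Y=\sigma(X)\}$. That would make $\{G(X)=G(Y)\}$ contain the graph of $\sigma$, which is a subgroup-type decomposition. For degree $3$, the automorphisms of $\overline{\Fp}(X)$ over $\overline{\Fp}(G)$ then have order dividing $3$. This contradicts $\sigma$ being a nontrivial involution, unless $\sigma$ is the identity, which is the diagonal already excluded.

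\textbf{Main obstacle.} The hard part is the algebra of the elimination, or equivalently making the M\"obius-involution argument rigorous. In the latter, the delicate point is that $Q_1$ might be a degenerate conic (a pair of lines $(X+u)(Y+u)$, or similar). That degenerate situation reduces to the four-lines case handled by \cref{lem:four-lines}, and I would dispose of it first, before running the main argument.
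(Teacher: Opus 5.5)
Your proposal is correct. However, what actually proves the lemma is your ``cleaner route'', and that route is genuinely different from the paper's.

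The paper does only the elimination. It writes the factors as $XY+u(X+Y)+v$ and $XY+r(X+Y)+s$ and notes that $E/A$ must equal one of $us+rv$, $uv+rs$. The sum and product of these two are expressible through $B/A$, $C/A$, $(D-2C)/A$, $F/A$. This gives a \emph{single} relation \eqref{eq:fixed-conics-relation}, not two as you anticipate. The substance of the paper's proof is the identity that, after substituting \eqref{eq:coefficients-direct}, this relation is exactly $-\Res_T(N,D)$. Your elimination paragraph states that outcome only as an aim, so on its own it would leave the decisive computation undone.

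Your involution argument bypasses that computation. A symmetric factor $aXY+b(X+Y)+c$ with $a\neq0$ and $ac\neq b^2$ is the graph of the nontrivial M\"obius involution $\sigma(X)=-(bX+c)/(aX+b)$. Hence $G\circ\sigma=G$. This is impossible, because $|\operatorname{Aut}(\overline{\F}_p(X)/\overline{\F}_p(G))|$ divides $[\overline{\F}_p(X):\overline{\F}_p(G)]=3$ and so contains no element of order $2$.

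Two small corrections.
\begin{itemize}
\item Your reduction of the degenerate case to \cref{lem:four-lines} is valid only when both factors are degenerate. If just one is, run the involution argument on the other. More simply, a factor $(X+u)(Y+u)$ of $H$ forces $N(-u)D(Y)=N(Y)D(-u)$ identically. That means either $N$ and $D$ share the root $-u$ or they are proportional, i.e.\ $\Res_T(N,D)=0$, with no appeal to \cref{lem:four-lines}.
\item Your antisymmetric check is vacuous. $A\neq0$ forces both factors to have a nonzero $XY$-coefficient, and an antisymmetric polynomial cannot have one since $p$ is odd.
\end{itemize}

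As for the trade-off: the paper's computation identifies the obstruction as precisely the resultant for this family, but it rests on a sizeable symbolic identity. Your argument uses nothing about the specific coefficients, only that $\deg G=3$ is odd. It also explains structurally why only the exchanged-conics case of \cref{prop:switched-conics} can survive: there the two conics are the graphs of a M\"obius map of order $3$ and its inverse.
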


\begin{proof}
Assume that \(H_{\lambda_1,\lambda_2}\) factors as a product of two conics
which are both fixed by the involution \((X,Y)\mapsto(Y,X)\). Because of the
shape of \(H_{\lambda_1,\lambda_2}\), such a factorization has the form
\[
H_{\lambda_1,\lambda_2}
=
\kappa
\bigl(XY+u(X+Y)+v\bigr)
\bigl(XY+r(X+Y)+s\bigr),
\]
with
\[
 \kappa,u,r,v,s\in\overline{\mathbb{F}}_p,
 \qquad
 \kappa\neq0.
\]
Expanding and comparing coefficients we obtain
\[
 A=\kappa,
\]
\[
 B=\kappa(u+r),
 \qquad
 C=\kappa ur,
\]
\[
 D=\kappa(v+s+2ur),
\]
\[
 E=\kappa(us+rv),
 \qquad
 F=\kappa vs.
\]
Therefore, the polynomial identity
\[
\begin{aligned}
&(u+r)(v+s)(us+rv)
-ur(v+s)^2
-(us+rv)^2        \\
&\qquad
-vs\bigl((u+r)^2-4ur\bigr)=0,
\end{aligned}
\]
yields
\[
 \frac BA
 \left(\frac DA-2\frac CA\right)
 \frac EA
 -
 \frac CA
 \left(\frac DA-2\frac CA\right)^2
 -
 \left(\frac EA\right)^2
 -
 \frac FA
 \left(
 \left(\frac BA\right)^2-4\frac CA
 \right)
 =0.
\]
Multiplying by \(A^3\), this gives
\begin{align}\label{eq:fixed-conics-relation}
0={}&
4ACF-AE^2-B^2F-2BCE+BDE \notag\\
&\qquad
-4C^3+4C^2D-CD^2.
\end{align}

A direct substitution of the explicit coefficients
\[
 A=-\lambda_1^p\lambda_2,
 \qquad
 B=\lambda_1,
 \qquad
 C=1-\lambda_2^{p+1},
\]
\[
 D=\lambda_1^{p+1}-\lambda_2^{p+1}+1,
 \qquad
 E=\lambda_1^p,
 \qquad
 F=-\lambda_1\lambda_2^p
\]
in \eqref{eq:fixed-conics-relation} now gives
\[
\begin{aligned}
&
0=4ACF-AE^2-B^2F-2BCE+BDE
-4C^3+4C^2D-CD^2        \\
&=
-\operatorname{Res}_T(N,D),
\end{aligned}
\]
see \eqref{eq:resultant-ND}.

Therefore
\(
 \operatorname{Res}_T(N,D)=0
\) holds, which is a contradiction with
\(
 \rdeg G_{\lambda_1,\lambda_2}=3
\)
by Proposition \ref{prop:reduced-degree}.
\end{proof}

It remains to consider the case in which the two conics are exchanged by the involution. This
case is the one that produces the third family in the Main Theorem.

\begin{proposition}\label{prop:switched-conics}
Assume that $\rdeg G_{\lambda_1,\lambda_2}=3$, $\lambda_1\lambda_2\neq0$, and let $N=\lambda_1^{p+1}$. Then $H_{\lambda_1,\lambda_2}$ factorizes into two conics exchanged by $(X,Y)\mapsto(Y,X)$, if and only if
there exists $c\in\Fp^{*}$ such that
\[
 \lambda_2=c\lambda_1^3 \qquad \text{ and }\qquad 3cN+1=0.
\]
\end{proposition}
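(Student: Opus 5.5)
We write the exchanged factorization in normal form, compare coefficients with \eqref{eq:ABCDEF}, and reduce everything to a single polynomial relation in $c=\lambda_2/\lambda_1^3$ and $N$. That relation factors, and the unwanted factor is precisely the reduced degree-one condition.

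\textbf{Step 1: normal form.} Suppose $H_{\lambda_1,\lambda_2}=\kappa\,Q(X,Y)\,Q(Y,X)$ with $Q$ a conic. Then $\deg_X H\le 2$ forces $\deg_XQ+\deg_YQ\le2$. The monomial $X^2Y^2$ occurs with coefficient $A\neq0$, so $Q$ must contain $XY$. Hence $\deg_XQ=\deg_YQ=1$, and after scaling
\[
Q(X,Y)=XY+aX+bY+d,\qquad a,b,d\in\overline{\F}_p .
\]
Expanding $\kappa(XY+aX+bY+d)(XY+bX+aY+d)$ and putting $s=a+b$, $P=ab$ gives
\[
A=\kappa,\quad B=\kappa s,\quad C=\kappa P,\quad D=\kappa(s^2-2P+2d),\quad E=\kappa ds,\quad F=\kappa d^2 .
\]
Conversely, given $\kappa,s,P,d$ satisfying these six equations, we take $a,b$ to be the roots of $z^2-sz+P$ and recover such a factorization.

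\textbf{Step 2: $c\in\Fp^*$.} Since $B=\lambda_1\ne0$, the relations for $B$ and $E$ give $d=E/B=\lambda_1^{p-1}$. Substituting into $F=Ad^2$ yields
\[
\lambda_2^p\lambda_1=\lambda_2\lambda_1^{3p-2},\qquad\text{that is,}\qquad \Bigl(\frac{\lambda_2}{\lambda_1^3}\Bigr)^{p}=\frac{\lambda_2}{\lambda_1^3}.
\]
So $c:=\lambda_2/\lambda_1^3$ lies in $\Fp^*$, and $\lambda_2=c\lambda_1^3$.

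\textbf{Step 3: the $D$-equation.} With $\lambda_2=c\lambda_1^3$ and $N=\lambda_1^{p+1}$ we have
\[
A=-cN\lambda_1^2,\quad s=B/A=-\frac{1}{cN\lambda_1},\quad d=\frac{N}{\lambda_1^2},\quad C=1-c^2N^3,\quad D=N-c^2N^3+1 .
\]
Then $A$, $B$, $C$, $E$, $F$ determine $\kappa,s,P,d$ consistently, and the only remaining condition is $D=As^2-2C+2Ad$. Multiplying it by $cN$ and writing $m=cN$, it becomes
\[
0=3m^3N-2m^2N-mN-3m-1=(3m+1)\bigl(Nm(m-1)-1\bigr)=(3cN+1)\bigl(c^2N^3-cN^2-1\bigr).
\]
I would double-check this expansion carefully, since the whole statement rests on this factorization.

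\textbf{Step 4: discarding the second factor.} By the proof of \cref{prop:degree-one-family}, the conditions $\lambda_2=c\lambda_1^3$, $c\in\Fp^*$ and $c^2N^3-cN^2-1=0$ are equivalent to $\rdeg G_{\lambda_1,\lambda_2}=1$. This is excluded by hypothesis, so the $D$-equation holds if and only if $3cN+1=0$.

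\textbf{Step 5: the two conics are genuinely exchanged.} We must also check that the two conics differ, i.e.\ $a\ne b$. If $a=b$, then $H=\kappa Q^2$ with $Q$ fixed by the involution, which \cref{lem:fixed-conics} rules out when $\rdeg G_{\lambda_1,\lambda_2}=3$. So $a\neq b$ automatically.

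\textbf{Converse.} Assume $\lambda_2=c\lambda_1^3$ with $c\in\Fp^*$ and $3cN+1=0$. Define $\kappa=A$, $s=B/A$, $P=C/A$ and $d=\lambda_1^{p-1}$. The computations of Steps 2 and 3 show that all six coefficient identities hold, so $H_{\lambda_1,\lambda_2}$ factors as two conics exchanged by $(X,Y)\mapsto(Y,X)$.

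The main obstacle is the algebra in Step 3. Organising it in the variable $m=cN$ is what exposes the factor $(3cN+1)$ together with the degree-one factor, and makes the rest of the argument straightforward.
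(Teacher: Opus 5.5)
Your proposal is correct and follows essentially the paper's route. You normalize the exchanged pair as $\kappa(XY+aX+bY+d)(XY+bX+aY+d)$ and use the $E$ and $F$ coefficients to force $c=\lambda_2/\lambda_1^3\in\Fp^*$. The $XY$ coefficient then reduces to $(3cN+1)(c^2N^3-cN^2-1)=0$, and you discard the second factor as the reduced degree-one case; your expansion in $m=cN$ checks out.

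There is one small slip in Step 1: the leading form $X^2Y^2$ also allows $Q$ to be a quadratic in $X$ alone, giving $H=\kappa q(X)q(Y)$. That is exactly the four-lines configuration excluded by \cref{lem:four-lines}, so nothing is lost. Your Step 5 check that $a\neq b$ is a point the paper leaves implicit.
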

\begin{proof}
Write the factorization as
\begin{equation}\label{eq:switched-factorization}
H_{\lambda_1,\lambda_2}
=
\kappa
(XY+\alpha X+\delta Y+\gamma)
(XY+\delta X+\alpha Y+\gamma).
\end{equation}
Expanding the right-hand side gives
\[
\begin{aligned}
H_{\lambda_1,\lambda_2}
=
\kappa\bigl[
&X^2Y^2
+(\alpha+\delta)(X^2Y+XY^2)
+\alpha\delta(X^2+Y^2)\\
&+(\alpha^2+\delta^2+2\gamma)XY
+\gamma(\alpha+\delta)(X+Y)
+\gamma^2
\bigr].
\end{aligned}
\]
Comparing coefficients with
\[
H_{\lambda_1,\lambda_2}
=
AX^2Y^2+B(X^2Y+XY^2)+C(X^2+Y^2)+DXY+E(X+Y)+F
\]
we obtain
\[
 A=\kappa,
\]
\[
 B=\kappa(\alpha+\delta),
 \qquad
 C=\kappa\alpha\delta,
\]
\[
 D=\kappa(\alpha^2+\delta^2+2\gamma),
\]
\[
 E=\kappa\gamma(\alpha+\delta),
 \qquad
 F=\kappa\gamma^2.
\]
 Put
\[
 S=\alpha+\delta=\frac BA,
 \qquad
 P=\alpha\delta=\frac CA.
\]
Since \(B\neq0\), we have \(S\neq0\). From
\[
 E=A\gamma S
\]
and \(AS=B\), we get
\[
 \gamma=\frac EB.
\]
Therefore, the equality
\[
 F=A\gamma^2
\]
yields
\begin{equation}\label{eq:coeff1}
 B^2F=AE^2.
\end{equation}

It remains to use the coefficient of \(XY\). Since
\[
 \alpha^2+\delta^2
 =
(\alpha+\delta)^2-2\alpha\delta
 =
S^2-2P,
\]
we have
\[
 D=A(S^2-2P+2\gamma).
\]
Substituting
\[
 S=\frac BA,\qquad P=\frac CA,\qquad \gamma=\frac EB
\]
gives
\[
 D
 =
 A\left(
 \frac{B^2}{A^2}
 -2\frac CA
 +2\frac EB
 \right),
\]
and hence
\begin{equation}\label{eq:coeff2}
 ABD=2A^2E+B^3-2ABC.
\end{equation}

Conversely, assume that \eqref{eq:coeff1} and \eqref{eq:coeff2} hold. Define
\[
 \kappa=A,\qquad
 S=\frac BA,\qquad
 P=\frac CA,\qquad
 \gamma=\frac EB.
\]
Let \(\alpha,\delta\in\overline{\Fp}\) be the two roots of
\[
 Z^2-SZ+P=0.
\]
Then
\[
 \alpha+\delta=S=\frac BA,
 \qquad
 \alpha\delta=P=\frac CA.
\]
With these choices, the coefficients of \(X^2Y^2\), \(X^2Y+XY^2\), and
\(X^2+Y^2\) in \eqref{eq:switched-factorization} are respectively
\[
 A,\qquad B,\qquad C.
\]
Also,
\[
 A\gamma(\alpha+\delta)
 =
 A\frac EB\frac BA
 =
 E,
\]
so the coefficient of \(X+Y\) is \(E\). Moreover, \eqref{eq:coeff1} gives
\[
 A\gamma^2
 =
 A\left(\frac EB\right)^2
 =
 F,
\]
so the constant coefficient is \(F\). Finally,
\[
 A(\alpha^2+\delta^2+2\gamma)
 =
 A(S^2-2P+2\gamma),
\]
and using \eqref{eq:coeff2} this is exactly \(D\). Hence
\eqref{eq:switched-factorization} holds over \(\overline{\Fp}\).

Now, substituting \eqref{eq:coefficients-direct} into \eqref{eq:coeff1} gives
\[
 \lambda_1^3\lambda_2^p=\lambda_1^{3p}\lambda_2.
\]
Since $\lambda_1\lambda_2\neq0$, this is equivalent to
\[
 \left(\frac{\lambda_2}{\lambda_1^3}\right)^p
 =\frac{\lambda_2}{\lambda_1^3}.
\]
Thus
\[
 c:=\frac{\lambda_2}{\lambda_1^3}\in\Fp^{*}.
\]
Now substituting $\lambda_2=c\lambda_1^3$ into
\eqref{eq:coeff2} gives
\[
 \lambda_1^3(3cN+1)(c^2N^3-cN^2-1)=0.
\]
As $\lambda_1\neq0$, one of the two factors vanishes. If
$c^2N^3-cN^2-1=0$, then we are in the reduced
degree-one case, a contradiction. Therefore,
$3cN+1=0$ holds. 

Conversely, assume that
\[
 \lambda_2=c\lambda_1^3,\qquad c\in\Fp^{*},\qquad 3cN+1=0,
\]
where \(N=\lambda_1^{p+1}\). We show that the two coefficient relations
\eqref{eq:coeff1} and \eqref{eq:coeff2} hold.

Since \(c\in\Fp\), we have
\[
 \lambda_2^p=c\lambda_1^{3p}.
\]
Therefore
\[
 B^2F
 =
 \lambda_1^2(-\lambda_1\lambda_2^p)
 =
 -\lambda_1^3\lambda_2^p
 =
 -c\lambda_1^{3p+3},
\]
whereas
\[
 AE^2
 =
 (-\lambda_1^p\lambda_2)\lambda_1^{2p}
 =
 -\lambda_1^{3p}\lambda_2
 =
 -c\lambda_1^{3p+3}.
\]
Thus
\[
 B^2F=AE^2,
\]
so \eqref{eq:coeff1} holds.

For the second relation, using
\[
 A=-\lambda_1^p\lambda_2=-c\lambda_1^{p+3},
 \qquad
 B=\lambda_1,
\]
\[
 C=1-\lambda_2^{p+1}=1-c^2N^3,
 \qquad
 D=\lambda_1^{p+1}-\lambda_2^{p+1}+1=N-c^2N^3+1,
\]
and \(E=\lambda_1^p\), we obtain
\[
\begin{aligned}
&ABD-\bigl(2A^2E+B^3-2ABC\bigr)\\
&\qquad =
\lambda_1^3(3cN+1)(c^2N^3-cN^2-1).
\end{aligned}
\]
Since \(3cN+1=0\), this gives
\[
 ABD=2A^2E+B^3-2ABC,
\]
that is, \eqref{eq:coeff2} holds.

So, \(H_{\lambda_1,\lambda_2}\) admits a
factorization of the form \eqref{eq:switched-factorization} over
\(\overline{\Fp}\).
\end{proof}

\begin{proposition}\label{prop:line-square-condition}
Assume $\rdeg G_{\lambda_1,\lambda_2}=3$ and
\[
 \lambda_2=c\lambda_1^3,
 \qquad
 c\in\Fp^{*},
 \qquad
 N=\lambda_1^{p+1},
 \qquad
 3cN+1=0.
\]
Then $f_{\lambda_1,\lambda_2}$ is a permutation polynomial of $\Fpp$ if and only if \[
 -3(1-4c^2N^3)\in\squarep.
\]
\end{proposition}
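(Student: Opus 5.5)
The plan is to use the explicit factorization from \cref{prop:switched-conics},
\[
H_{\lambda_1,\lambda_2}=A\,Q_1Q_2,\qquad Q_1=XY+\alpha X+\delta Y+\gamma,\quad Q_2=XY+\delta X+\alpha Y+\gamma,
\]
and to decide when one of the conics carries many off-diagonal points of $\mu_{p+1}^2$. The relevant symmetry is the semilinear involution $\sigma(x,y)=(x^{-p},y^{-p})$. It fixes $\mu_{p+1}^2$ pointwise. Since $H_{\lambda_1,\lambda_2}$ comes from $N(X)D(Y)-N(Y)D(X)$ and $N(T)=T^3D(T^{-1})^p$ up to Frobenius on coefficients, the curve $\mathcal C_{\lambda_1,\lambda_2}$ is $\sigma$-stable. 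Hence $\sigma$ either fixes each of $Q_1,Q_2$ or exchanges them.

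First I make this dichotomy explicit. Computing $\sigma(Q_1)$ and normalizing gives the conic $XY+\gamma^{-p}(\delta^pX+\alpha^pY+1)$, or the analogous expression. So $\sigma$ preserves $Q_1$ iff $\alpha^p=\alpha\gamma^{p}$-type relations hold, i.e.\ iff the twisted Frobenius $z\mapsto \gamma z^{-p}\cdots$ fixes $\alpha$ and $\delta$ rather than swapping them. Now $\alpha,\delta$ are the roots of $Z^2-SZ+P$ with $S=B/A=-1/(c\lambda_1^{p+2})$ and $P=C/A$. After multiplying by $c^2\lambda_1^{2p+4}$ and using $3cN+1=0$, its discriminant becomes
\[
c^2\lambda_1^{2p+4}(S^2-4P)=1+4cN(1-c^2N^3)=-\tfrac13\bigl(1-4c^2N^3\bigr).
\]
I expect the twisted Frobenius to act on the rescaled roots $c\lambda_1^{p+2}\alpha,\ c\lambda_1^{p+2}\delta$ as an $\Fp$-structure, perhaps after a further rescaling by a norm-type factor. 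Then $\sigma$ fixes each conic exactly when $\sqrt{-3(1-4c^2N^3)}\notin\Fp$, and swaps them when $-3(1-4c^2N^3)\in\squarep$. Pinning down this normalization precisely is the main obstacle. I would first verify it on the explicit coefficients \eqref{eq:coefficients-direct}, also using the relations $\gamma=E/B=\lambda_1^{p-1}$ and $\gamma^{p+1}=1$.

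Next comes the case $-3(1-4c^2N^3)\notin\squarep$. Each $Q_i$ is then $\sigma$-stable, so its image under $\psi$ (as in \cref{lem:irreducible-impossible}) is a conic defined over $\Fp$. It is irreducible, since otherwise we are in the four-lines case excluded by \cref{lem:four-lines}. Such a conic has $p+1$ rational points, and only $O(1)$ of them lie at infinity, on the diagonal, or on zeros of the denominators. Hence $G$ is not injective on $\mu_{p+1}$ for $p$ above a small explicit bound; the remaining small primes would be checked by Magma as in \cref{lem:irreducible-impossible}. If the discriminant is $0$, then $\alpha=\delta$, so $Q_1=Q_2$, which is a fixed conic; this case is handled the same way.

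Finally, the case $-3(1-4c^2N^3)\in\squarep$. Then $\sigma(Q_1)=Q_2$, so any point of $\mu_{p+1}^2$ on $\mathcal C_{\lambda_1,\lambda_2}$ lies on $Q_1\cap Q_2$. Subtracting the two equations gives $(\alpha-\delta)(X-Y)=0$, so all such points lie on the diagonal. Thus $G$ is injective on the set of points of $\mu_{p+1}$ where the denominator does not vanish. It remains to check \eqref{eq:denominator-condition}. I would do this as in \cref{prop:degree-one-family}: put $w=\lambda_1v$, reduce $1+w+cw^3$ together with its conjugate equation $w^3+Nw^2+cN^3=0$ using $3cN=-1$, and show that a common root of norm $N$ exists only when the same discriminant is a non-square. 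Once \eqref{eq:denominator-condition} holds, $G$ maps $\mu_{p+1}$ into $\mu_{p+1}$ injectively, hence bijectively, and \cref{prop:agw-reduction} finishes the proof.
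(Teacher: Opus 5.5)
Your route is correct, but it is organized differently from the paper's. The paper never uses the involution $\sigma$. It rescales $x=\lambda_1X$, $y=\lambda_1Y$ and writes the conics as $xy+\rho x+(3-\rho)y+N=0$ and its swap, where $\rho,3-\rho$ are the roots of $Z^2-3Z+(9-N)/3$, whose discriminant is $\Theta=-3(1-4c^2N^3)$. It then computes each case by hand. For $\rho\in\Fp$, imposing $y^{p+1}=N$ gives $(2\rho-3)(x^2+3x+N)=0$, which forces $y=x$. For $\rho\notin\Fp$, the map $x\mapsto-(N+\rho x)/(x+3-\rho)$ sends $\{x^{p+1}=N\}$ to itself and has at most two fixed points.

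Your $\sigma$ explains both computations at once: the first is in effect $Q_1\cap\sigma(Q_1)$ lying on the diagonal, and the second is the $\sigma$-stability of $Q_1$. That is more conceptual. The paper's version is fully explicit and works for every $p>3$ without any point count.

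The step you flagged as the main obstacle closes quickly. Since $\gamma=\lambda_1^{p-1}\in\mu_{p+1}$, one has $\gamma^{-p}=\gamma$, so $\sigma(Q_1)=XY+\gamma\delta^pX+\gamma\alpha^pY+\gamma$. Moreover $\gamma\alpha^p=\alpha$ if and only if $(\lambda_1\alpha)^p=\lambda_1\alpha$. Now $\lambda_1\alpha,\lambda_1\delta$ are the roots of $Z^2-3Z+(9-N)/3\in\Fp[Z]$. Your factor $c\lambda_1^{p+2}=-\lambda_1/3$ differs from $\lambda_1$ by an element of $\Fp$, so it works equally well. Hence $\sigma$ swaps the conics when $\Theta\in\squarep$ and fixes each one when $\Theta$ is a nonsquare. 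Your denominator check is the same as the paper's: it reduces to $w^2+3w+3-N/3=0$, whose discriminant is again $\Theta$.

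Two minor corrections.
\begin{enumerate}
\item $\Theta=0$ cannot occur here. With $3cN+1=0$ it means $N=9/4$, which gives $c^2N^3-cN^2-1=0$, i.e.\ reduced degree one. In that case the ``conic'' would be $(x+\tfrac32)(y+\tfrac32)$, so it is not an irreducible conic ``handled the same way''.
\item In the nonsquare case no Magma check is needed. The $\psi$-transform of $Q_1$ is the graph of an $\Fp$-rational M\"obius map, so it has $p+1$ points over $\Fp$ (counting $\infty$), at most two of them on the diagonal. A zero of the denominator on $\mu_{p+1}$ need not be excluded, since it would already violate \eqref{eq:denominator-condition}.
\end{enumerate}
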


\begin{proof}
By Proposition \ref{prop:switched-conics}, $H_{\lambda_1,\lambda_2}$ factors as in \eqref{eq:switched-factorization}.
After substituting $\lambda_2=c\lambda_1^3$ and $3cN+1=0$, the two conics can be
written, up to nonzero scalar factors, in the form
\[
 XY+\alpha X+\delta Y+\gamma=0,
 \qquad
 XY+\delta X+\alpha Y+\gamma=0,
\]
\[
 \alpha+\delta=\frac BA,\qquad
 \alpha\delta=\frac CA,\qquad
 \gamma=\frac EB,
\]
where
\[
 A=-\lambda_1^p\lambda_2=\frac{\lambda_1^2}{3},
 \qquad
 B=\lambda_1,
\]
\[
 C=1-\lambda_2^{p+1}=1-c^2N^3=1-\frac N9=\frac{9-N}{9},
\]
and
\[
 \gamma=\frac EB=\frac{\lambda_1^p}{\lambda_1}
 =\lambda_1^{p-1}=\frac N{\lambda_1^2}.
\]
Thus
\[
 \alpha+\delta=\frac3{\lambda_1},
 \qquad
 \alpha\delta=\frac{9-N}{3\lambda_1^2}.
\]
Equivalently, the two numbers
\[
 \lambda_1\alpha,\qquad \lambda_1\delta
\]
are the roots of
\[
 Z^2-3Z+\frac{9-N}{3}=0.
\]
Its discriminant is
\[
 \Theta:=9-\frac{4(9-N)}3=\frac{4N-9}{3}.
\]
Since \(c^2N^3=N/9\), we also have
\[
 \Theta=-3(1-4c^2N^3).
\]
Let \(\rho\) be one root of
\[
 Z^2-3Z+\frac{9-N}{3}=0.
\]
Then the other root is \(3-\rho\).

We now study the affine points $(X,Y)$ of the two conics with coordinates in
\(\mu_{p+1}\times\mu_{p+1}\). It is useful to rescale the variables in order to translate the condition
\(X,Y\in\mu_{p+1}\) into a norm condition. Put
\[
 x=\lambda_1X,\qquad y=\lambda_1Y.
\]
Then \(X,Y\in\mu_{p+1}\) if and only if
\[
 x^{p+1}=y^{p+1}=N.
\]
In these variables, one of the two conic components becomes
\begin{equation}\label{eq:scaled-line-conic}
 xy+\rho x+(3-\rho)y+N=0.
\end{equation}
The other component is obtained by replacing \(\rho\) with \(3-\rho\). Moreover,
\[
 X,Y\in\mu_{p+1}
 \quad\Longleftrightarrow\quad
 x^{p+1}=y^{p+1}=N.
\]

Assume first that \(\Theta\in\squarep\). Then \(\Theta\neq0\), so
\(\rho,3-\rho\in\Fp\) are distinct. We show that in this case
\eqref{eq:scaled-line-conic} has no solution with
\[
 x^{p+1}=y^{p+1}=N,\qquad x\neq y.
\]
From \eqref{eq:scaled-line-conic},
\[
 y=-\frac{N+\rho x}{x+3-\rho}.
\]
The denominator cannot vanish. Indeed, if \(x=\rho-3\), then the numerator would
also have to vanish, so
\[
 N+\rho(\rho-3)=0.
\]
Since \(\rho(3-\rho)=(9-N)/3\), this gives
\[
 N=\frac{9-N}{3},
\]
hence \(N=9/4\), which is equivalent to \(\Theta=0\), a contradiction.

Now impose \(y^{p+1}=N\). Since \(\rho\in\Fp\) and \(x^{p+1}=N\), we have
\(x^p=N/x\). Therefore
\[
 y^p
 =
-\frac{N+\rho N/x}{N/x+3-\rho}
 =
-\frac{N(x+\rho)}{N+(3-\rho)x}.
\]
Hence \(y^{p+1}=N\) is equivalent to
\[
 (x+\rho)(N+\rho x)=(N+(3-\rho)x)(x+3-\rho).
\]
After expansion, using \(\rho+(3-\rho)=3\), this becomes
\[
 (2\rho-3)(x^2+3x+N)=0.
\]
Since \(\Theta\neq0\), we have \(2\rho-3\neq0\), and therefore
\[
 x^2+3x+N=0.
\]
However, substituting \(y=x\) into \eqref{eq:scaled-line-conic} gives precisely
\[
 x^2+3x+N=0.
\]
Since the equation determines \(y\) uniquely, we get \(y=x\). Thus the conic has
no point with \(x^{p+1}=y^{p+1}=N\) off the diagonal. The same argument applies
to the other component.

We also need to check that the denominator of \(G_{\lambda_1,\lambda_2}\) does
not vanish on \(\mu_{p+1}\). Let \(T\in\mu_{p+1}\) and put
\[
 w=\lambda_1T.
\]
Then \(w^{p+1}=N\), and
\[
1+\lambda_1T+\lambda_2T^3=1+w+cw^3.
\]
Since \(c=-1/(3N)\), a zero of the denominator would satisfy
\[
 w^3-3Nw-3N=0.
\]
Raising to the \(p\)-th power and using \(w^p=N/w\), we get
\[
 \frac{N^3}{w^3}-3N\frac Nw-3N=0,
\]
and hence
\[
 N^2-3Nw^2-3w^3=0.
\]
Using again \(w^3=3Nw+3N\), we obtain
\[
 w^2+3w+3-\frac N3=0.
\]
The discriminant of this quadratic is
\[
 9-4\left(3-\frac N3\right)=\frac{4N-9}{3}=\Theta.
\]
Since \(\Theta\in\squarep\), any such \(w\) lies in \(\Fp\). Then
\(w^{p+1}=N\) gives \(w^2=N\). Substituting \(w^2=N\) into
\[
 w^3-3Nw-3N=0
\]
gives
\[
 -2Nw-3N=0,
\]
so \(w=-3/2\), and hence \(N=w^2=9/4\). This would imply \(\Theta=0\),
contrary to \(\Theta\in\squarep\). Thus the denominator has no zero on
\(\mu_{p+1}\).

Therefore, if \(\Theta\in\squarep\), the denominator of \(G_{\lambda_1,\lambda_2}\) is nonzero on
\(\mu_{p+1}\), and the curve $\mathcal{C}_{\lambda_1,\lambda_2}$ has no point
\[
 (X,Y)\in\mu_{p+1}^2,\qquad X\neq Y.
\]
By Proposition~\ref{prop:agw-reduction},
\(f_{\lambda_1,\lambda_2}\) is a permutation polynomial of \(\Fpp\).

Conversely, assume first that \(\Theta=0\). Then \(N=9/4\). Taking
\[
 w=-\frac32
\]
we have \(w^{p+1}=w^2=N\), since \(w\in\Fp\). Moreover
\(c=-1/(3N)=-4/27\), and
\[
 1+w+cw^3
 =
 1-\frac32-\frac4{27}\left(-\frac{27}{8}\right)
 =
0.
\]
Thus the denominator \(G_{\lambda_1,\lambda_2}\) has a zero on \(\mu_{p+1}\), and
\(f_{\lambda_1,\lambda_2}\) is not a permutation polynomial by Proposition \ref{prop:agw-reduction}.

Finally, assume that \(\Theta\) is a nonsquare in \(\Fp\). Then
\(\rho\notin\Fp\), and
\[
 \rho^p=3-\rho.
\]
Choose any \(x\in\Fpp\) with \(x^{p+1}=N\), and define
\[
 y=-\frac{N+\rho x}{x+3-\rho}.
\]
First observe that the denominator is nonzero. Indeed, if \(x=\rho-3\), then
\[
 x^{p+1}
 =
(\rho-3)(\rho^p-3)
 =
(\rho-3)(-\rho)
 =
\rho(3-\rho)
 =
\frac{9-N}{3},
\]
which is not equal to \(N\), since otherwise \(N=9/4\) and \(\Theta=0\).

Now compute the norm of \(y\). Since \(\rho^p=3-\rho\) and \(x^p=N/x\),
\[
 y^p
 =
-\frac{N+(3-\rho)N/x}{N/x+\rho}
 =
-\frac{N(x+3-\rho)}{N+\rho x}.
\]
Thus
\[
 y^{p+1}=N.
\]
So the conic sends the set \(\{x\in\Fpp:x^{p+1}=N\}\) into itself. Its fixed
points satisfy \(y=x\), hence
\[
 x^2+3x+N=0,
\]
which has at most two solutions. Since \(p+1>2\), we can choose
\(x^{p+1}=N\) which is not fixed. For this choice we obtain
\[
 x^{p+1}=y^{p+1}=N,
 \qquad
 x\neq y,
\]
and \eqref{eq:scaled-line-conic} holds. Therefore, with
\[
 X=\frac{x}{\lambda_1},\qquad
 Y=\frac{y}{\lambda_1},
\]
we have
\[
 X,Y\in\mu_{p+1},\qquad X\neq Y,\qquad
H_{\lambda_1,\lambda_2}(X,Y)=0,
\]
and hence \(f_{\lambda_1,\lambda_2}\) is not a
permutation polynomial in this case.

Therefore \(f_{\lambda_1,\lambda_2}\) is a permutation polynomial if
and only if
\[
 \Theta\in\squarep,
\]
and the claim follows.
\end{proof}

Combining \cref{lem:irreducible-impossible,lem:four-lines,lem:fixed-conics,prop:switched-conics,prop:line-square-condition}
shows that in reduced degree $3$ the only possible permutation polynomials are those in the
third family of the Main Theorem. The binomial subcase gives the first family.

\begin{proof}[Proof of \cref{thm:main}]

By Proposition~\ref{prop:agw-reduction}, \(f_{\lambda_1,\lambda_2}\) is a
permutation polynomial of \(\Fpp\) if and only if
\[
1+\lambda_1T+\lambda_2T^3
\]
has no zero on \(\mu_{p+1}\), and
\(
G_{\lambda_1,\lambda_2}(T)
\)
permutes \(\mu_{p+1}\).

We first prove necessity. Assume that \(f_{\lambda_1,\lambda_2}\) is a permutation
polynomial. If \(\lambda_1=0\), Proposition~\ref{prop:binomial-family} gives
case \textup{(i)}. Hence assume \(\lambda_1\neq0\).

If \(\rdeg G_{\lambda_1,\lambda_2}=1\), then
Proposition~\ref{prop:degree-one-family} gives case \textup{(ii)}. The cases
\(\rdeg G_{\lambda_1,\lambda_2}=0\) and
\(\rdeg G_{\lambda_1,\lambda_2}=2\) do not give permutation polynomials by
Proposition~\ref{prop:degree-zero-two}. Thus it remains to consider
\[
 \rdeg G_{\lambda_1,\lambda_2}=3.
\]
By Proposition~\ref{prop:lambda2-zero-rdeg3}, we may assume
\(\lambda_2\neq0\). Hence
\[
 A=-\lambda_1^p\lambda_2\neq0,
 \qquad
 B=\lambda_1\neq0.
\]
By Lemma~\ref{lem:irreducible-impossible}, the curve $\mathcal{C}_{\lambda_1,\lambda_2}$ is not
absolutely irreducible. The possible reducible cases are then restricted by the
factorization lemmas: the curve cannot split into four lines by
Lemma~\ref{lem:four-lines}, and it cannot split into two conics fixed by the
involution by Lemma~\ref{lem:fixed-conics}. Therefore the only admissible
factorization is the one into two conics exchanged by
\((X,Y)\mapsto(Y,X)\). Proposition~\ref{prop:switched-conics} gives
\[
 \lambda_2=c\lambda_1^3,\qquad c\in\Fp^{*},\qquad 3cN+1=0,
 \qquad N=\lambda_1^{p+1}.
\]
Finally, Proposition~\ref{prop:line-square-condition} gives
\[
 -3(1-4c^2N^3)\in\squarep.
\]
Thus case \textup{(iii)} holds.

Conversely, assume that one of the three conditions in the statement holds.
If \textup{(i)} holds, then the conclusion follows from
Proposition~\ref{prop:binomial-family}. If \textup{(ii)} holds, then
Proposition~\ref{prop:degree-one-family} shows that
\(f_{\lambda_1,\lambda_2}\) is a permutation polynomial of \(\Fpp\). If
\textup{(iii)} holds, then Proposition~\ref{prop:line-square-condition}
shows that \(f_{\lambda_1,\lambda_2}\) is a permutation polynomial of
\(\Fpp\).

Thus each of the three conditions is sufficient. This completes the proof.
\end{proof}
If we restrict to the prime field case, our classification has the following by-product.

\begin{corollary}\label{cor:subfield}
Assume that $\lambda_1,\lambda_2\in\Fp$. Then $f_{\lambda_1,\lambda_2}$ is a permutation
polynomial of $\Fpp$ if and only if one of the following holds:
\begin{enumerate}[label=\rm(\roman*),leftmargin=1.5em]
\item
\[
 \lambda_1=0,
 \qquad
 p\equiv1\pmod3,
 \qquad
 \lambda_2^2\neq1;
\]
\item $\lambda_1\neq0$,
\[
 \lambda_2^2-\lambda_1\lambda_2-1=0,
 \qquad
 1-4\lambda_2^2\in\squarep;
\]
\item $\lambda_1\neq0$,
\[
 \lambda_1+3\lambda_2=0,
 \qquad
 -3(1-4\lambda_2^2)\in\squarep.
\]
\end{enumerate}
\end{corollary}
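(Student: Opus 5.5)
The plan is to specialize \cref{thm:main} to $\lambda_1,\lambda_2\in\Fp$. For such coefficients $\lambda_1^p=\lambda_1$ and $\lambda_2^p=\lambda_2$, so $N=\lambda_1^{p+1}=\lambda_1^2$ and $\lambda_2^{p+1}=\lambda_2^2$. I will go through the three cases of the theorem one at a time and rewrite each in terms of $\lambda_1,\lambda_2$ alone. Since the theorem is an ``if and only if'', translating each case faithfully gives the corollary in both directions.

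\textbf{Case (i).} With $\lambda_1=0$, the condition $\lambda_2^{p+1}\neq1$ becomes $\lambda_2^2\neq1$, and the condition $p\equiv1\pmod 3$ is unchanged.

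\textbf{Cases (ii) and (iii): the constant $c$.} Here $\lambda_1\neq0$. Since $\lambda_1,\lambda_2\in\Fp$, the element $c=\lambda_2/\lambda_1^3$ automatically lies in $\Fp$. The theorem requires $c\neq0$, but this costs nothing: $c=0$ would make $c^2N^3-cN^2-1=-1\neq0$ and $3cN+1=1\neq0$. So the existential condition ``there exists $c\in\Fp^*$ with $\lambda_2=c\lambda_1^3$'' can be dropped; $c$ is simply defined as $\lambda_2/\lambda_1^3$. The key identity is
\[
cN=\frac{\lambda_2}{\lambda_1^3}\,\lambda_1^2=\frac{\lambda_2}{\lambda_1},
\qquad\text{so}\qquad
c^2N^3=(cN)^2N=\lambda_2^2 .
\]
I will then rewrite each condition using this identity.

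\textbf{Case (ii).} The equation $c^2N^3-cN^2-1=0$ becomes $\lambda_2^2-(\lambda_2/\lambda_1)\lambda_1^2-1=0$, that is, $\lambda_2^2-\lambda_1\lambda_2-1=0$. The square condition $1-4c^2N^3\in\squarep$ becomes $1-4\lambda_2^2\in\squarep$.

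\textbf{Case (iii).} The equation $3cN+1=0$ becomes $3\lambda_2/\lambda_1+1=0$, that is, $\lambda_1+3\lambda_2=0$. The square condition $-3(1-4c^2N^3)\in\squarep$ becomes $-3(1-4\lambda_2^2)\in\squarep$.

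Each rewriting is reversible once $\lambda_1\neq0$, so no step is a genuine obstacle. The only point needing care is the remark above that $c\neq0$ is automatic and costs nothing.
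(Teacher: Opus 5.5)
Your proposal is correct and follows the paper's own proof. Like the paper, you specialize the Main Theorem via $\lambda_1^{p+1}=\lambda_1^2$, $\lambda_2^{p+1}=\lambda_2^2$ and $c=\lambda_2/\lambda_1^3$, then rewrite each of the three families, and your remark that $c\in\mathbb F_p$ is automatic and that $c\neq0$ is forced by $c^2N^3-cN^2-1=0$ or $3cN+1=0$ spells out a point the paper leaves implicit.
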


\begin{proof}
If $\lambda_1,\lambda_2\in\Fp$, then $\lambda_1^{p+1}=\lambda_1^2$ and
$\lambda_2^{p+1}=\lambda_2^2$. The binomial case becomes (i).
In the second family of \cref{thm:main}, we have
$c=\lambda_2/\lambda_1^3$ and $N=\lambda_1^2$. The equation
$c^2N^3-cN^2-1=0$ becomes
\[
 \lambda_2^2-\lambda_1\lambda_2-1=0,
\]
and the square condition becomes $1-4\lambda_2^2\in\squarep$. In the third
family, $3cN+1=0$ becomes
\[
 3\frac{\lambda_2}{\lambda_1^3}\lambda_1^2+1=0,
\]
that is $\lambda_1+3\lambda_2=0$. The square condition becomes
$-3(1-4\lambda_2^2)\in\squarep$.
\end{proof}

\begin{remark}
Although permutation rational functions of degree $3$ are classified up to
equivalence, the alternative proof based on that classification is not really shorter than the geometric argument via algebraic curves. Indeed, in the degree-$3$ case one still has to analyse in detail the possible M\"obius equivalences and impose the additional structural constraints satisfied by our map $G_{\lambda_1,\lambda_2}$. As a consequence, the resulting case-by-case analysis is at least as involved as the argument
based on the auxiliary curve $\mathcal{C}_{\lambda_1,\lambda_2}$.
\end{remark}

\begin{remark}
It is natural to ask whether the permutation polynomials classified in
\cref{thm:main} give rise to different equivalence classes, or whether they all
belong to a single class.

Two maps \(F,G:\Fpp\to\Fpp\) are said to be EA-equivalent if there exist
affine permutations \(L_1,L_2\) of \(\Fpp\), and an affine map \(L_3\), such that
\[
 G=L_1\circ F\circ L_2+L_3.
\]
This is one of the standard equivalence relations used for vectorial functions
over finite fields; see, for instance, \cite{CarletCharpinZinoviev1998,PottZhou}.
A standard invariant under EA-equivalence is the differential spectrum. For a
map \(F:\Fpp\to\Fpp\), this is the multiset of the numbers
\[
 \#\{x\in\Fpp: F(x+a)-F(x)=b\},
 \qquad a\in\Fpp^{*},\ b\in\Fpp.
\]
Thus, if two functions have different differential spectra, then they cannot be
EA-equivalent. The converse is not true in general, so the differential spectrum
does not give a complete classification of EA-classes.

We performed MAGMA computations for small primes using the differential spectrum
as an EA-invariant. In the case of coefficients in \(\Fpp\), these computations
already show that the family in \cref{thm:main} does not form a single
EA-equivalence class. For instance, for \(p=5\), writing
\(\mathbb F_{25}=\Fp(\beta)\) with \(\beta^2=2\), the admissible pairs
\((\lambda_1,\lambda_2)\) split into at least two different differential
spectra. The same happens for \(p=7\), writing
\(\mathbb F_{49}=\Fp(\beta)\) with \(\beta^2=3\). Hence the permutation
polynomials classified in \cref{thm:main} are not all EA-equivalent.

In the subfield case \(\lambda_1,\lambda_2\in\Fp^*\), the computations suggest a
more rigid behaviour. For \(p=11\), the admissible pairs split into two
EA-classes:
\[
 \{(3,10),(8,1)\}
\]
and
\[
 \{(1,4),(1,8),(5,2),(6,9),(10,3),(10,7)\}.
\]
For \(p=13\), they again split into two EA-classes:
\[
 \{(3,12),(10,1)\}
\]
and
\[
 \{(4,3),(6,10),(7,3),(9,10)\}.
\]
The same pattern occurs in the computations for \(p=17\) and \(p=19\): there is
one class containing the two special pairs
\[
 (-3,1)\qquad\text{and}\qquad (3,-1),
\]
and another class containing all the remaining admissible pairs.
\end{remark}

\begin{conjecture}
Assume that \(\lambda_1,\lambda_2\in\Fp^*\) . For every odd prime \(p\ge 11\), the
permutation polynomials classified in \cref{thm:main} fall into exactly two
EA-equivalence classes. One class consists of the two special pairs
\[
 (-3,1)\qquad\text{and}\qquad (3,-1),
\]
while the other contains all the remaining admissible pairs.
\end{conjecture}

\section*{Acknowledgments}
The authors thank the Italian National Group for Algebraic and Geometric Structures and their Applications (GNSAGA—INdAM)
which supported the research.

\end{document}